\theoremstyle{definition}
\newtheorem{definition}{Definition}[section]
\newtheorem{example}[definition]{Example}
\newtheorem{remark}[definition]{Remark}
\theoremstyle{plain}
\newtheorem{theorem}[definition]{Theorem}
\newtheorem{lemma}[definition]{Lemma}
\numberwithin{equation}{section}
\def \R {\mathbb{R}}
\def \loc {\mathrm{loc}}
\def \N {\mathbb{N}}
\def \c {\mathbf{c}}
\def \KK {\mathcal{K}}
\def \HH {\mathcal{H}}
\def \BB {\mathcal{B}}
\def \TT {\mathcal{T}}
\def \DD {\mathcal{D}}
\def \GG {{\mathcal{G}}}
\def \AA {{\mathcal{A}}}
\def \d {\mathrm{d}}
\def \de {\partial}
\def \MM {\mathrm{M}}
\title[Singular BVPs with functional terms]{Boundary value problems associated with \\
 singular strongly nonlinear equations \\ with functional terms}
\author[S. Biagi]{Stefano Biagi}
\address{Stefano Biagi\hfill\break\indent
Dipartimento di Matematica \hfill\break\indent
Politecnico di Milano \hfill\break\indent
Via Bonardi, 9 \hfill\break\indent
20133 Milano (Italy)}
\email{stefano.biagi@polimi.it}
\author[A. Calamai]{Alessandro Calamai}
\address{Alessandro Calamai\hfill\break\indent
Dipartimento di Ingegneria Civile, Edile e Architettura \hfill\break\indent
Universit\`a Politecnica delle Marche\hfill\break\indent
Via Brecce Bianche, 12\hfill\break\indent
60131 Ancona (Italy)}
\email{calamai@dipmat.univpm.it}
\author[C. Marcelli]{Cristina Marcelli}
\address{Cristina Marcelli \hfill\break\indent
Dipartimento di Ingegneria Industriale e Scienze Matematiche \hfill\break\indent
Universit\`a Politecnica delle Marche\hfill\break\indent
Via Brecce Bianche, 12\hfill\break\indent
60131 Ancona (Italy)}
\email{marcelli@dipmat.univpm.it}
\author[F. Papalini]{Francesca Papalini}
\address{Francesca Papalini \hfill\break\indent
Dipartimento di Ingegneria Industriale e Scienze Matematiche \hfill\break\indent
Universit\`a Politecnica delle Marche\hfill\break\indent
Via Brecce Bianche, 12\hfill\break\indent
60131 Ancona (Italy)}
\email{papalini@dipmat.univpm.it}
\keywords{boundary-value problems; singular ODEs; $\Phi$-Laplace operator; functional ODEs; 
upper/lower solutions}
\subjclass[2010]{
% 34B40, %Boundary value problems on infinite intervals
% 34C37, % 	Homoclinic and heteroclinic solutions
34K10, %	Boundary value problems for functional-differential equations
34B16, %Singular nonlinear boundary value problems
34L30 %Nonlinear ordinary differential operators
}
\date{}
\begin{document}  
\begin{abstract} 
We study boundary value problems associated with singular,
strongly nonlinear differential equations with functional terms of type
$$\big(\Phi(k(t)\,x'(t))\big)' + f(t,\GG_x(t))\,\rho(t, x'(t)) = 0$$
on a compact interval $[a,b]$.
These equations are quite general due to the presence of 
a strictly increasing homeomorphism $\Phi$, the so-called $\Phi$-Laplacian operator,
of a nonnegative function $k$, which may vanish on a set of null measure,
and moreover of a functional term $\GG_x$.
We look for solutions, in a suitable weak sense, which belong to the Sobolev space $W^{1,1}([a,b])$.
Under the assumptions of the existence of a well-ordered pair of upper and lower solutions and
of a suitable Nagumo-type growth condition, we prove an existence result by means of 
fixed point arguments.
\end{abstract}  
  \maketitle

  \section{Introduction} \label{sec.intro}
  The main aim of this paper is to study the solvability
  (in a suitable weak sense) of boundary value problems (BVPs, in short)
  of the following form:
  \begin{equation} \label{eq.BVPintro}
   \begin{cases}
     \big(\Phi(k(t)\,x'(t))\big)' + f(t,\GG_x(t))\,\rho(t, x'(t)) = 0 & \text{a.e.\,on
     $I := [a,b]$}, \\[0.1cm]
     x(a) = \HH_a[x],\,\,x(b) = \HH_b[x].
   \end{cases}   
  \end{equation}
where $\Phi:\R\to\R$, the so-called \emph{\(\Phi\)-Laplacian operator}, is a strictly increasing 
ho\-meo\-mor\-phism,
$k:I\to \R$ is a bounded nonnegative function satisfying 
$$1/k\in L^1(I),$$
$f$ and $\rho$ are Carath\'{e}odory functions, and $\GG_x$,
$\HH_a$, $\HH_b$ are \emph{functional} terms, i.e.,
\begin{itemize}
     \item $\GG:W^{1,1}(I)\to L^\infty(I)$ is a continuous operator which 
      verifies suitable bound\-ed\-ness and monotonicity conditions;
      \item $\HH_a,\,\HH_b:W^{1,1}(I)\to\R$ are continuous
      and increasing operators.
\end{itemize}
In particular, we stress that the differential equation
\begin{equation} \label{eq.intro}
    \big(\Phi(k(t)\,x'(t))\big)' + f(t,\GG_x(t))\,\rho(t, x'(t)) = 0 \quad \text{a.e.\,on } I
\end{equation}
is quite general, since it contains a functional term which can be 
\emph{non-local} or \emph{delayed} (see examples in Section \ref{sec.examples}).
Moreover, due to the fact that
the function $k$ may vanish on a set having zero Lebesgue measure,
 as a particular case of \eqref{eq.intro} one gets the \emph{singular} ODE
\[
    \big(\Phi(k(t)\,x'(t))\big)' + f(t,x(t))\,\rho(t, x'(t)) = 0 \quad \text{a.e.\,on } I
\] 
which has been already studied by the authors in the recent papers
\cite{BCP_heteroclinek, CaMaPa2018}.
In the context of singular ODEs, it seems natural to look for solutions with weak regularity, namely in 
$W^{1,1}(I)$ rather than in $C^{1}(I)$.
As a consequence, 
when con\-si\-de\-ring the BVP \eqref{eq.BVPintro} we assume that all the 
involved operators are defined in the Sobolev space $W^{1,1}(I)$.

The \(\Phi\)-Laplacian operator can be considered as a generalization of the classical $p$-Laplace operator $\Phi_p(z) := |z|^{p-2}z$.
The study of different BVPs, both on boun\-ded and on unbounded domains, associated 
with equations with \(\Phi\)-Laplacian, like
\begin{equation} \label{philap.intro}
(\Phi(x'))'=f(t,x,x'),
\end{equation}
is motivated by applications, e.g.\ in non-Newtonian
fluid theory, diffusion of flows in porous media, nonlinear elasticity
and theory of capillary surfaces.
Many au\-thors have considered such kind of problems and proposed generalizations in various directions;
see, e.g., \cite{BeJeMa10, BeMa07, BeMa08, CabORPou,CabPou,CabPou2,KFA}.
We also refer the reader to the survey \cite{Cab2011} and to the references therein.
Let us mention here equations with mixed differential operators, that is,
\begin{equation} \label{philap.atx.intro}
\left(a(t,x)\Phi(x')\right)' =  f(t,x,x'),
\end{equation}
where $a$ is a continuous positive function 
(see, e.g., \cite{Biagi_ANPA, Biagi_Isernia, Ca, CupMarPap}). In the au\-to\-no\-mous case, namely, 
$$a(t,x)\equiv a(x),$$
equation \eqref{philap.atx.intro} also arises in some models, 
e.g.\  reaction-diffusion equations with non-constant diffusivity and
porous media equations.

In this framework, a typical approach to get existence results
is given by the combination of 
fixed point techniques 
and the method of upper and lower solutions.
A crucial tool which gives a priori bounds for the derivatives of the solutions is a Nagumo-type growth condition on the nonlinearity.
Recently, in the paper \cite{MaPa17} the authors obtained an existence result assuming a weak form of Wintner-Nagumo growth condition. The approach of \cite{MaPa17} has been fruitfully extended to the context of singular equations: see  \cite{Biagi_ANPA, BCMP_first, 
BCP_heteroclinek, Biagi_Isernia, CaMaPa2018}.
In our main result (see Theorem \ref{thm.mainPAPER} below) we assume the following weak 
Nagumo growth condition:
\begin{equation*}
 | f(t,z)\rho(t,y)|\le \psi(|\Phi(k(t)y)|)\cdot\left( \ell(t)+\mu(t)|y|^{\frac{q-1}{q}}\right), \label{ip:psi2}
 \end{equation*}
where \(\mu \in L^q(I)\), $q>1$, \(\ell\in  L^1(I)\), \(\psi\) is measurable and such that
\[ \int_1^{+\infty}
\frac{\d s}{\psi(s)} = +\infty. \]
This assumption allows to consider a very general operator $\Phi$.

As pointed out, in this paper we turn our attention on singular equations with functional terms, 
both inside the differential equation and in the boundary conditions. 
%moreover we allow functional terms also in the boundary conditions.
As far as we know, equations involving the \(\Phi\)-Laplacian operator and functional terms are
less studied and understood due to technical difficulties,
see  \cite{AORS06, HS05}.
Singular equations with \(\Phi\)-Laplacian are also few studied, and just for a 
restricted class of nonlinearities
(see \cite{Liu2012,LY2015}).
Thus, the coexistence of all these features (singular equations with \(\Phi\)-Laplacian and functional terms) makes our problem particularly challenging from a theoretical point of view.
\medskip

While we refer to Section \ref{sec.examples} for some concrete examples
illustrating the ap\-pli\-ca\-bi\-li\-ty of our results, here we limit ourselves
to point out that our approach allows us to prove the solvability of, e.g.,
$$\begin{cases}
         \big(\Phi_p
         \big(|\sin(t)|^{1/\vartheta_0}\,x'(t)\big)\big)' + 
         x_{\tau}(t)\,|x'(t)|^\delta = 0
         & \text{a.e.\,on $I$}, \\[0.25cm]
         \,\,x(0) = \sqrt[3]{x(\pi)},\,\,
         x(2\pi) = \frac{1}{4\pi}\int_0^{2\pi}({x(s)}+{2})\,\d s
        \end{cases}$$
where $\Phi_p(z) = |z|^{p-2}z$ is the usual $p$-Laplace operator, $\vartheta_0,\delta$
are positive constants 
and the functional term $\GG_x = x_{\tau}$ is of delay-type, that is,
$$x_\tau(t) := \begin{cases}
x(t-\tau), & \text{for $t\in[0,2\pi],\,t\geq\tau$},\\
x(0), & \text{otherwise}
\end{cases}$$
\noindent A brief plan of the paper is now in order. 
\begin{itemize}
 \item[$\diamond$] In Section \ref{sec.mainresult} we fix some preliminary definitions
 and we state our main existence result, namely Theorem \ref{thm.mainPAPER}.
 
 \item[$\diamond$] In Section \ref{sec.proofTHM} we provide
 the proof Theorem \ref{thm.mainPAPER}, which articulates
 into two steps: 
 first, we perform a truncation argument and we introduce an auxiliary BVP
 to which suitable existence results do apply; 
 then, we show that \emph{any} solution the `truncated' problem
 is actually a solution of the original BVP. 
  In doing this, we use in a crucial way the assumption of the existence 
 of a well-ordered pair of lower and upper solutions of our problem. 
 
 \item[$\diamond$] In Section \ref{sec.examples} we present some examples to which 
 our Theorem \ref{thm.mainPAPER} applies. 
 
 \item[$\diamond$] Finally, we close the paper with an Appendix 
 containing the explicit proof of a technical Lemma,
which in some previous papers was missing, and in other papers was either not complete or not correct.
\end{itemize}

   \section{Preliminaries and main results} \label{sec.mainresult}
     Let $a,b\in\R$ satisfy $a<b$, and let $I:=[a,b]$. As mentioned
     in the 
     In\-tro\-duc\-tion,
     throughout this paper we shall be concerned
     with BVPs of the following form
     \begin{equation} \label{eq.BVPmainSec2}
      \begin{cases}
       \big(\Phi(k(t)\,x'(t))\big)' + f(t,\GG_x(t))\,\rho(t, x'(t)) = 0  & \text{a.e.\,on
       $I$}, \\[0.1cm]
       x(a) = \HH_a[x],\,\,x(b) = \HH_b[x],
      \end{cases}   
     \end{equation}
     where  \(\Phi\) is a strictly increasing homeomorphism, 
     \(f, \rho: I\times \R\to \R\) are Carath\'eodory functions, and $k,\,\GG, \, \HH_a,\,\HH_b$
     satisfy the following assumptions:
     \begin{itemize}
      \item[(H1)] $k:I\to\R$ is a \emph{nonnegative} function satisfying
      \begin{equation} \label{eq.assumptionkconcrete}
		k\in L^\infty(I)\qquad\text{and}\qquad 1/k\in L^1(I).
      \end{equation}       

      \item[(H2)] $\GG:W^{1,1}(I)\to L^\infty(I)$ is continuous 
      (with respect to the usual norms) and
       \emph{bounded
       when $W^{1,1}(I)$ is thought of as a subspace} of $L^\infty(I)$; this means,
       precisely, that
       for every $r > 0$ there exists $\eta_r > 0$ such that
       \begin{equation} \label{eq.assumptionGGbounded}
      \|\GG_x\|_{L^\infty(I)}\leq \eta_r \quad
      \text{for any $x\in W^{1,1}(I)$ with $\|x\|_{L^\infty(I)}\leq r$}.
       \end{equation}
       
       \item[(H3)] There exists a constant $\kappa \geq 0$ such that
       \begin{equation} \label{eq.assumptionGGmonotone}
        \begin{array}{c}
      \text{$f(t,\GG_x(t))+\kappa\,x(t)
      \leq f(t,\GG_y(t))+\kappa\,y(t)$ a.e.\,on $I$} \\[0.2cm]
      \text{for every $x,y\in W^{1,1}(I)$ such that
      $x\leq y$  a.e. on $I$}.
      \end{array}
       \end{equation}
      
      \item[(H4)] $\HH_a,\,\HH_b:W^{1,1}(I)\to\R$ are continuous
      (with respect to the usual to\-po\-lo\-gies)
      and monotone increasing, that is, 
      \begin{equation} \label{eq.assumptionHHaHHbmonotone}
      \begin{array}{c}
      \text{$\HH_a[x]\leq\HH_b[y]$\quad and\quad $\HH_b[x]\leq \HH_b[y]$} \\[0.2cm]
      \text{for every $x,y\in W^{1,1}(I)$ such that
      $x\leq y$ a.e. on $I$}.
      \end{array}
      \end{equation}
     \end{itemize}
     \begin{remark} \label{rem.continuityGG}
      We point out, for a future reference, that
      the continuity of $\GG$ from $W^{1,1}(I)$ into
      $L^\infty(I)$ is ensured if
      $\GG$ maps continuously
      $W^{1,1}(I)$ into some Banach space
      $(X,\|\cdot\|_X)$ which is \emph{continuously embedded}
      into $L^\infty(I)$. \vspace*{0.05cm}
      
      This is the case, e.g.,
      of the following functional spaces:
      \begin{itemize}
       \item[(1)] $X = W^{1,p}(I)$ (with $p\geq 1$ and the usual norm);       
       
       \item[(2)] $X = C^n(I,\R)$ (with $n\in\N$ and the usual norm).
      \end{itemize}
      Moreover, we also notice that the monotonicity
      assumption (H3) seems very na\-tu\-ral to get existence results for problem \eqref{eq.BVPintro}.
      We point out that a similar mo\-no\-to\-ni\-city assumption has already 
      been considered by the authors in a 
      different context in the paper \cite{CaMaPa2009}.       
      \end{remark}
     \begin{remark} \label{rem.assumptionkconcrete}
      We explicitly notice that, in view of assumption (H1), the function
    $k$ can vanish on a set $E\subseteq\R$ of zero Lebesgue measure
    (in particular, $E$ could be infinite). As a consequence,
    the ODE appearing in \eqref{eq.BVPmainSec2}
      may be singular.
     \end{remark}
    
	 The aim in this paper is
      to study the solvability of
     \eqref{eq.BVPmainSec2} in a weak sense, according to the following definition. 
     \begin{definition} \label{def.solutionconcrete}
      We say that a function $x\in W^{1,1}(I)$ is a \emph{solution}
      of problem \eqref{eq.BVPmainSec2} if it satisfies
      the following two properties:
      \begin{itemize}
     \item[(1)] the map $t\mapsto \Phi(k(t)\,x'(t))$ is in $W^{1,1}(I)$ and
     $$\big(\Phi(k(t)\,x'(t))\big)' 
     + f(t,\GG_x(t))\,\rho(t, x'(t)) = 0 \qquad\text{for a.e.\,$t\in I$};$$
     \item[(2)] $x(a) = \HH_a[x]$ and $x(b) = \HH_b[x]$.
    \end{itemize}
    If $x$ satisfies only property (1), we say that $x$ is a \emph{solution
    of the ODE}
    \begin{equation} \label{eq.ODEconcrete}
    \big(\Phi(k(t)\,x'(t))\big)' + f(t,\GG_x(t))\,\rho(t, x'(t)) = 0.
    \end{equation} 
     \end{definition}
     Another fundamental notion
     for our investigation of
     the solvability of
     \eqref{eq.BVPmainSec2} is 
     the notion of lower/upper solution,
     which is contained in the next definition.
     \begin{definition} \label{def.lowerupper}
      We say that a function $x\in W^{1,1}(I)$ is a
      \emph{lower 
      \emph{[resp.\,}upper\emph{]} 
      solution} of problem \eqref{eq.BVPmainSec2} if it satisfies
      the following two properties:
      \begin{itemize}
     \item[(1)] the map $t\mapsto \Phi(k(t)\,x'(t))$ is in $W^{1,1}(I)$ and
     $$\big(\Phi(k(t)\,x'(t))\big)' +
     f(t,\GG_{x(t)})\,\rho(t, x'(t))
     \geq\,[\leq]\,\,\,0 \qquad\text{for a.e.\,$t\in I$};$$
     \item[(2)] $x(a) \leq\,[\geq]\,\,\,\HH_a[x]$ and 
     $x(b) \leq\,[\geq]\,\,\,\HH_b[x]$.
    \end{itemize}
    If the function
    $x$ satisfies only property (1), we say that it is a 
    \emph{lower 
      \emph{[resp.\,}upper\emph{]} 
      solution of the ODE} in \eqref{eq.BVPmainSec2}.

     \end{definition}
     \begin{remark} \label{rem.functionKx}
       If $u\in W^{1,1}(I)$ is any function such that
       $$t\mapsto\Phi(k(t)\,u'(t))\in W^{1,1}(I)$$
       (this is the case, e.g., of any lower/upper
       solution of \eqref{eq.ODEconcrete}), 
       the continuity of $\Phi^{-1}$ implies the existence
       of a (unique) continuous function  $\mathcal{K}_u$ such that
       $$\text{$\mathcal{K}_u(t) = k(t)\,u'(t)$ for a.e.\,$t\in I$
       \quad and \quad $\Phi\circ\KK_u\in W^{1,1}(I)$}.$$
    In particular, this is true if $u$ is a \emph{solution}
    of \eqref{eq.ODEconcrete}.
   \end{remark}
     After all these preliminaries, we can state the main result of the paper.

     \begin{theorem} \label{thm.mainPAPER}
      Let the structural assumptions \emph{(H1)}-to-\emph{(H4)} be in force. More\-over,
      let us suppose that the following 
      additional hypotheses are satisfied:
      \begin{itemize}
       \item[\emph{(H5)}] there exist a 
       lower solution $\alpha$ 
       and an upper solution $\beta$ of problem \eqref{eq.BVPmainSec2}
       which are
       \emph{well-ordered} on $I$, that is, $\alpha(t)\leq\beta(t)$ for every $t\in I$;
       \item[\emph{(H6)}] for every $R > 0$ and every non-negative
       function $\gamma\in L^1(I)$ there exists a non-negative
       function $h = h_{R,\gamma}\in L^1(I)$ such that
       \begin{equation} \label{eq.growthfrhoass}
        \begin{array}{c}
        |f(t,z)\,\rho(t, y(t))|\leq h_{R,\gamma}(t) \\[0.2cm]
        \text{for a.e.\,$t\in I$, every $z\in\R$ with $|z|\leq R$} \\[0.05cm]
        \text{and every
        $y\in L^1(I)$ such that $|y(s)|\leq \gamma(s)$ for a.e.\,$s\in I$}.
        \end{array}
       \end{equation}
       \item[\emph{(H7)}] for every $R > 0$ there exist
       a constant $H = H_R > 0$, a non-negative fun\-ction $\mu = \mu_R\in L^q(I)$
       \emph{(}with $1<q\leq\infty$\emph{)}, a non-negative
       function $l = l_R\in L^1(I)$ and a measurable function
       $\psi =\psi_R:(0,\infty)\to(0,\infty)$ such that
       \begin{align}
	 &(\ast)\,\,1/\psi\in L^1_{\loc}(0,\infty)
	 \quad \text{and} \quad 
	 \int^\infty \frac{1}{\psi(t)}\,\d t = \infty; \label{eq.integralpsidiv} \\[0.2cm] 
	 &(\ast\ast)\begin{array}{l}
	 |f(t,z)\,\rho(t, y)| \leq \psi\big(|\Phi(k(t)y)|\big)
	 \cdot\big(l(t)+\mu(t)\,|y|^{\frac{q-1}{q}}\big); \\[0.15cm]
	 \text{for a.e.\,$t\in I$, any $z\in [-R,R]$ and any $y\in\R$ with
	$|k(t) y| \geq H$.}
	\end{array} \label{eq.Nagumocondition}
	\end{align}
      \end{itemize}
      Then, there exists a
     solution $x_0\in W^{1,1}(I)$ of 
    problem \eqref{eq.BVPmainSec2},
	fur\-ther sa\-ti\-sfyi\-ng 
	\begin{equation} \label{eq.x0traalphabeta}
	 \alpha(t)\leq x_0(t)\leq\beta(t) \qquad\text{for every $t\in I$}.
	\end{equation}
	Moreover, the following higher-regularity properties
	hold:
	\begin{itemize}
	 \item[\emph{(1)}] if
	$1/k\in L^\vartheta(I)$ for some $1<\vartheta\leq\infty$, 
	one also has that $x_0\in W^{1,\vartheta}(I)$;
	 \item[\emph{(2)}] if 
	 $k\in C(I,\R)$ and $k > 0$ on $I$, one also has that $x_0\in C^1(I,\R)$.
	\end{itemize}
	Finally, if $M > 0$ is any real number such that
	$\|\KK_\alpha\|_{L^\infty(I)},\,\|\KK_\beta\|_{L^\infty(I)}\leq M$, there
	exists a constant $L_M > 0$, only depending on $M$, such that
	\begin{equation} \label{eq.boundKKx0LM}
	 \|x_0\|_{L^\infty(I)}\leq M \qquad\text{and}\qquad\|\KK_{x_0}\|_{L^\infty(I)}\leq L_M.
	\end{equation}
     \end{theorem}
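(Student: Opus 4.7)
The proof unfolds in two phases, as anticipated in the paper's outline: first, a truncation yields an \emph{auxiliary} BVP without a genuine functional dependence on $x$ in the equation, to which a previously established non-functional existence theorem applies; second, one verifies that the resulting solution is trapped between $\alpha$ and $\beta$, so that the truncation becomes inactive and \eqref{eq.BVPmainSec2} is solved. I set $r:=\max(\|\alpha\|_{L^\infty(I)},\|\beta\|_{L^\infty(I)})$ and let $T[x]:=\max\{\alpha,\min\{x,\beta\}\}$ denote the order truncation, so that $\|T[x]\|_{L^\infty(I)}\leq r$ for every $x\in W^{1,1}(I)$. By (H2) one has $\|\GG_{T[x]}\|_{L^\infty(I)}\leq\eta_r$, and (H6) then furnishes a fixed $h\in L^1(I)$ dominating $|f(t,\GG_{T[x]}(t))\rho(t,y(t))|$ for every admissible pair $(x,y)$.

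The first main ingredient is a Nagumo-type a priori estimate drawn from (H7): for every $W^{1,1}$-solution $x$ of the ODE with $\|x\|_{L^\infty(I)}\leq M$, one has $\|\KK_x\|_{L^\infty(I)}\leq L_M$ for some constant $L_M$ depending only on $M$ (and on the data of (H7) at $R=\eta_r$). The argument is the classical one: on any subinterval where $|k(t)x'(t)|\geq H$, the bound \eqref{eq.Nagumocondition} combined with H\"{o}lder's inequality in $L^q$ controls $\int |(\Phi\circ\KK_x)'|/\psi(|\Phi\circ\KK_x|)$ in terms of $\|kx'\|_{L^1(I)}$, which itself is bounded via $\|x\|_{L^\infty(I)}\leq M$ up to a lower-order contribution; the divergence condition \eqref{eq.integralpsidiv} then forces a ceiling on $|\Phi\circ\KK_x|$, hence on $\KK_x$ by the strict monotonicity of $\Phi$.

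Armed with this estimate, the auxiliary problem is obtained by (i) replacing $\GG_x$ by $\GG_{T[x]}$; (ii) inserting the shift $\kappa(x-T[x])$, which vanishes when $\alpha\leq x\leq\beta$; (iii) composing $\rho(t,\cdot)$ with a smooth cutoff that freezes its argument whenever $|k(t)y|>L_M$, so that the full nonlinearity is dominated by a fixed $L^1$ function; and (iv) replacing $\HH_a[x],\HH_b[x]$ by $\HH_a[T[x]],\HH_b[T[x]]$, bounded in $\R$. Writing $x'=\Phi^{-1}(\KK_x)/k$ via the homeomorphism $\Phi^{-1}$ and exploiting $1/k\in L^1(I)$ shows that the associated solution operator is completely continuous on a suitable closed ball of $W^{1,1}(I)$; Schauder's fixed-point theorem (or, alternatively, the non-functional instance of the main existence result from \cite{CaMaPa2018}) then produces a solution $x_0$ of the auxiliary BVP.

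The main obstacle is the final comparison step, where (H3) and (H4) enter decisively. Suppose toward a contradiction that $v:=\alpha-x_0$ attains a strictly positive maximum on $I$. At $t=a$, the monotonicity of $\HH_a$ in (H4) together with $T[x_0]\geq\alpha$ yields $x_0(a)=\HH_a[T[x_0]]\geq\HH_a[\alpha]\geq\alpha(a)$, and analogously at $t=b$, so the maximum must be interior. On the maximal open interval $J\subset I$ on which $x_0<\alpha$ one has $T[x_0]=\alpha$; subtracting the lower-solution inequality from the auxiliary ODE and invoking (H3) yields
\begin{equation*}
 \bigl(\Phi(k\alpha')-\Phi(kx_0')\bigr)'\;\geq\;\kappa(\alpha-x_0)\;\geq\;0 \quad\text{a.e.\ on }J,
\end{equation*}
so $\Phi(k\alpha')-\Phi(kx_0')$ is nondecreasing on $J$; integrating across $J$ and using that $v$ vanishes at its endpoints, together with the strict monotonicity of $\Phi$, produces a contradiction. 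The bound $x_0\leq\beta$ follows symmetrically. Consequently both truncations are inactive, $\|\KK_{x_0}\|_{L^\infty(I)}\leq L_M$, and $x_0$ solves \eqref{eq.BVPmainSec2}; the regularity claims (1) and (2) then follow at once from $x_0'=\KK_{x_0}/k$ together with $\KK_{x_0}\in C(I,\R)$.
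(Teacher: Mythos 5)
Your overall strategy (truncate, solve the auxiliary problem by fixed point, then use $\alpha,\beta$ to remove the truncation) is the paper's, but two steps as written do not go through. The first and most serious one is the design of the auxiliary equation and its use in the comparison step. You put into the $\rho$-slot a cutoff of $x'$ itself, and then claim that on the maximal interval $J$ where $x_0<\alpha$, ``subtracting the lower-solution inequality from the auxiliary ODE and invoking (H3)'' yields $\bigl(\Phi(k\alpha')-\Phi(kx_0')\bigr)'\geq\kappa(\alpha-x_0)$. This does not follow: the lower-solution inequality involves $\rho(t,\alpha'(t))$, while your auxiliary equation involves $\rho(t,\mathrm{cutoff}(x_0'(t)))$, and after subtraction one is left with the term $f(t,\GG_\alpha(t))\bigl[\rho(t,\mathrm{cutoff}(x_0'(t)))-\rho(t,\alpha'(t))\bigr]$, which has no sign; (H3) only compares the $f(t,\GG_\cdot)$ factors and says nothing about $\rho$. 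The paper avoids this precisely by feeding $\rho$ with $\DD_{\TT_x'}$, i.e.\ the (further truncated) derivative of the truncated function, and by choosing the truncation level $\gamma_L=L_M/k+|\alpha'|+|\beta'|$ so that it never clips $\alpha'$ or $\beta'$: then on $J$ one has $\TT_u\equiv\alpha$, hence the $\rho$-arguments on both sides are literally $\alpha'$, and the (H3) manipulation (which also needs $\rho\geq0$) closes the estimate. Without this feature your key differential inequality, and hence the whole comparison step, is unjustified. (Also, ``integrating across $J$ and using that $v$ vanishes at its endpoints'' cannot be taken literally: $x_0',\alpha'$ exist only a.e., $k$ may vanish, and endpoint values of $v$ give no pointwise information on derivatives; one must select, as in the paper, points of positive-measure sets in $J$ where $k>0$, the derivatives exist, and $v'$ has the appropriate sign, and compare the continuous representative $\Phi\circ\KK$ at such points.)

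The second gap is the final assertion ``Consequently both truncations are inactive, $\|\KK_{x_0}\|_{L^\infty(I)}\leq L_M$.'' Your Nagumo a priori bound was stated for solutions of the \emph{original} ODE, whereas $x_0$ solves the equation with the derivative cutoff; deducing that the cutoff is inactive from a bound that presupposes the cutoff is absent is circular. The fix is the paper's Claims 3--5: from $\alpha\leq x_0\leq\beta$ one gets $\|x_0\|_{L^\infty}\leq M$, then the choice $N>2M\|k\|_{L^\infty}/(b-a)$ produces a point where $|\KK_{x_0}|\leq N$, and the Nagumo estimate is run \emph{on the truncated equation} over an interval where $N<\KK_{x_0}<L_M$, observing that there the derivative truncation is inactive so (H7) applies; only then does \eqref{eq.choiceL} give the contradiction. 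Relatedly, your justification of the Hölder step (``$\|kx'\|_{L^1(I)}$ is bounded via $\|x\|_{L^\infty(I)}\leq M$'') is incorrect as stated, since the total variation of $x$ is not controlled by its sup norm; what makes the estimate work is that $u'$ has constant sign on the interval of integration, so $\int|u'|=|u(t_2)-u(t_1)|\leq 2M$. Finally, replacing the bounded penalization $\arctan(x-\TT_x)$ by $\kappa(x-T[x])$ destroys the uniform $L^1$ domination of the right-hand side that you yourself invoke for complete continuity, so the fixed-point step would need an extra invariant-ball argument.
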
      
     \begin{remark} \label{rem.existencealphabeta}
      Despite its relevance in our argument, the existence of a 
      well\--or\-de\-red pair of lower and upper solutions
     $\alpha,\beta$ for \eqref{eq.BVPmainSec2} is not obvious
     (see, e.g., \cite{Cab2011, MaPa14} and the
     reference therein for general results on this topic).
     Here, we limit ourselves to observe that, if $\rho(t,0) = 0$
     for every $t\in I$,
     then \emph{any constant function}
     is both a lower and an upper solution \emph{for the ODE}
     \eqref{eq.ODEconcrete}. \vspace*{0.05cm}
     
     As a positive
     counterpart of the previous comment, we shall present in
     the next Section \ref{sec.examples}
     a couple of examples of BVPs to which
     Theorem \ref{thm.mainPAPER} applies.
     \end{remark}
     % ---
     \section{Proof of Theorem \ref{thm.mainPAPER}} \label{sec.proofTHM}
     The proof of Theorem \ref{thm.mainPAPER} is rather
     technical and long; for this reason, after
     ha\-ving introduced some constants and parameters used throughout,
     we shall pro\-ceed by establishing several claims.
     Roughly put, our approach consists of two steps. \medskip
     
     \textsc{Step I}: As a first step, by crucially exploiting the existence
     of a well-ordered pair of lower and upper solutions
     $\alpha,\beta$ for \eqref{eq.BVPmainSec2} (see, precisely, assumption (H5)), 
     we perform a truncation argument and we introduce
     a new problem, say $\mathrm{(P)_\tau}$, to which some
     abstract results do apply. \medskip
     
     \textsc{Step II}: Then, we show that \emph{any} solution
     of $\mathrm{(P)_\tau}$ is actually a solution
     of \eqref{eq.BVPmainSec2}. In doing this, we use again
     in a crucial way the fact that $\alpha$ and $\beta$ are,
     re\-spec\-ti\-ve\-ly, a lower and an upper solution for \eqref{eq.BVPmainSec2}.  
     \medskip
     
     \noindent We then begin by fixing some quantities which shall be
     used all over the proof. \bigskip
     
     First of all,
      we choose a real $M > 0$ in such a way that
      $\|\alpha\|_{L^\infty(I)}\leq M$ and $\|\beta\|_{L^\infty(I)}\leq M$.
      Moreover, using assumption (H2), we let
      $\eta_M > 0$ be such that
      \begin{equation} \label{eq.choiceetaM}
       \|\GG_u\|_{L^\infty(I)}\leq \eta_M \qquad\text{for all
       $u\in W^{1,1}(I)$ with $\|u\|_{L^\infty(I)}\leq M$}.
      \end{equation}  
      With reference to assumption (H7), we then set
      $$H_M := H_{\eta_M}, \qquad \mu_M := \mu_{\eta_M}, \qquad
      l_M = l_{\eta_M}, \qquad \psi_M := \psi_{\eta_M}.$$
      Now, since
      $\Phi$ is strictly increasing, we can choose $N > 0$ such that
      \begin{equation} \label{eq.choiceN}
       \begin{split}
       & \Phi(N) > 0,\,\,\,\Phi(-N) < 0 \qquad\text{and} \\[0.15cm]
       & \quad N > \max\Big\{H_M,\frac{2M}{b-a}\cdot\|k\|_{L^\infty(I)}\Big\};
       \end{split}
      \end{equation}
      accordingly, owing to \eqref{eq.integralpsidiv}, we fix $L = L_M \geq N > 0$ in such a way that
      \begin{equation} \label{eq.choiceL}
      \begin{split}
      & \min\bigg\{\int_{\Phi(N)}^{\Phi(L_M)}\frac{1}{\psi_M}\,\d s,  
       \int^{-\Phi(-L_M)}_{-\Phi(-N)}\frac{1}{\psi_M}\,\d s\bigg\} \\[0.15cm]
       & \qquad\qquad > \|l_M\|_{L^1(I)}+\|\mu_M\|_{L^q(I)}\cdot(2M)^{\frac{q-1}{q}},
       \end{split}
      \end{equation}
	  and we consider the function $\gamma_L\in L^1(I)$ defined as: 
      \begin{equation} \label{eq.defigammaL}
       \gamma_L(t) := \frac{L_M}{k(t)}+|\alpha'(t)|+|\beta'(t)|.
      \end{equation}
     Following the notation in Appendix \ref{app.continuityTT}, we also define
     the truncating operators
     \begin{equation} \label{eq.defiOpTT}
      \TT := \TT^{\,\alpha,\beta} \qquad\text{and}\qquad
      \DD := \TT^{\,-\gamma,\gamma}.
     \end{equation}
     Given any $x\in W^{1,1}(I)$, we then consider the function $F_x$ defined by
     \begin{equation} \label{eq.defiOpFtrunc}
     \begin{array}{c}
      F_x(t)  :=
      -f\big(t,\GG_{\TT_x}(t)\big)\,\rho\big(t, \DD_{\TT_x'}(t)\big)+
      \arctan\big(x(t)-\TT_x(t)\big).
      \end{array}
     \end{equation}
     Finally, we consider the
      operators $\BB_a,\,\BB_b:W^{1,1}(I)\to\R$ defined as
     \begin{equation} \label{eq.defiBBtrunc}
      \BB_a := \HH_a\circ \TT, \qquad \BB_b := \HH_b\circ\TT.
	\end{equation}            
      Thanks to all these preliminaries, we can finally introduce
     the following BVP (which can be thought of as a
     truncated version of problem \eqref{eq.BVPmainSec2}):
     \begin{equation} \label{eq.BVPtrunc}
      \begin{cases} 
      \big(\Phi(k(t)\,x'(t))\big)' = F_x(t)  & \text{a.e.\,on
       $I$}, \\[0.1cm]
       x(a) = \BB_a[x],\,\,x(b) = \BB_b[x].
      \end{cases}
     \end{equation}
     We now proceed by following the steps described above. \bigskip
     
     \noindent\textbf{Step I.} In this first step we prove the following result: 
     \emph{there exists \emph{(}at least\emph{)} one so\-lu\-tion $u\in W^{1,1}(I)$
      of problem \eqref{eq.BVPtrunc}; this means, precisely, that
      \begin{itemize}
       \item the map $t\mapsto \Phi(k(t)\,u'(t))$ is in $W^{1,1}(I)$ and
     $$\big(\Phi(k(t)\,u'(t))\big)' = F_u(t) \qquad\text{for a.e.\,$t\in I$};$$
       \item $u(a) = \BB_a[u]$ and $u(b) = \BB_b[u]$.
      \end{itemize}
      Furthermore, the following
      higher-regularity assertions hold:
      \begin{itemize}
       \item[\emph{(i)}] if
	$1/k\in L^\vartheta(I)$ for some $1<\vartheta\leq\infty$, 
	one also has that $x_0\in W^{1,\vartheta}(I)$;
       \item[\emph{(ii)}] if $k\in C(I,\R)$ and $k > 0$ on $I$, then $u\in C^1(I,\R)$.
      \end{itemize}}
 	\medskip
     
    \textsc{Claim 1.} \emph{There exists a non-negative function
      $\psi\in L^1(I)$ such that
      \begin{equation} \label{eq.essumptionF}
      |F_x(t)|\leq \psi(t) \qquad\text{for a.e.\,$t\in I$ and every $x\in W^{1,1}(I)$}.
      \end{equation}}
     First of all, by the choice of $M$
       and the very definition of $\TT_x$ we have
       $$-M\leq \alpha(t)\leq \TT_x(t)\leq\beta(t)\leq M \qquad\text{for all
       $x\in W^{1,1}(I)$ and any $t\in I$};$$
       as a consequence, owing to the choice of $\eta_M$ in \eqref{eq.choiceetaM},
       we get
       \begin{equation} \label{eq.boundforGGTTx}
        \|\GG_{\TT_x}\|_{L^\infty(I)}\leq \eta_M \qquad\text{for every $x\in W^{1,1}(I)$}.
       \end{equation}
       Moreover, owing to the very definition of
       $\DD$, we also have
       \begin{equation} \label{eq.boundforDDTTx}
        |\DD_{\TT_x'}(t)|\leq {\gamma}_L(t) \qquad\text{for any $x\in W^{1,1}(I)$
        and a.e.\,$t\in I$}.
       \end{equation}
       Gathering together \eqref{eq.boundforGGTTx} and
       \eqref{eq.boundforDDTTx}, we deduce from
       assumption (H6) that there exists
       a non-negative function $h = h_{\eta_M,{\gamma}_L}\in L^1(I)$ such that
       \begin{equation} \label{eq.estimFxpsitousedopo}
        |F_x(t)| \leq \big|f\big(t,\GG_{\TT_x}(t)\big)\,
       \rho\big(t, \DD_{\TT_x'}(t)\big)\big|
       +\frac{\pi}{2}\leq h_{\eta_M,{\gamma}_L}(t)
       +\frac{\pi}{2},
       \end{equation}
       and this estimate holds for every $x\in W^{1,1}(I)$ and a.e.\,$t\in I$.
       Since, obviously, 
       $$\psi := h_{\eta_M,{\gamma}_L}+\pi/2\in L^1(I),$$ 
       we conclude
       at once that $F_x\in L^1(I)$ for every $x\in W^{1,1}(I)$
       (hence, $F$ maps $W^{1,1}(I)$ into $L^1(I)$) and that
       $F$ satisfies estimate \eqref{eq.essumptionF}. \medskip
       
       \textsc{Claim 2.} 
       \emph{$F$ is continuous from $W^{1,1}(I)$ into $L^1(I)$}. \medskip
       
       \noindent Let $x_0\in W^{1,1}(I)$ be fixed, and let
       $\{x_n\}_n\subseteq W^{1,1}(I)$ be a sequence
       converging to $x_0$ as $n\to\infty$. Moreover,
       let $\{u_k := x_{n_k}\}_k$ be any sub-sequence
       of $\{x_n\}_n$. 
       
       To demonstrate the continuity of 
       $F$ it suffices to show that, by choosing a further
       sub-sequence if necessary, one has
       \begin{equation} \label{eq.toprovecontF}
        \lim_{k\to\infty}F_{u_k} = F_{x_0} \qquad\text{in $L^1(I)$}.
       \end{equation}
      First of all we observe that, since $u_k\to x_0$ in $W^{1,1}(I)$
      as $k\to\infty$, we have
      \begin{equation} \label{eq.uktox0uniformly}
       \lim_{k\to\infty}u_k(t) = x_0(t) \qquad\text{uniformly for $t\in I$};
      \end{equation}
      moreover, by Lemma \ref{lem.continuityTT} we also have
      \begin{equation} \label{eq.TTuktoTTx0}
       \lim\limits_{k\to\infty}\TT_{u_k} = \TT_{x_0} \qquad \text{in $W^{1,1}(I)$}.
       \end{equation} 
       In particular, since \eqref{eq.TTuktoTTx0} 
       implies that $\TT_{u_k}'\to \TT_{x_0}'$ in $L^1(I)$ as $k\to\infty$,
       by possibly choosing a sub-sequence we can assume that
       \begin{equation} \label{eq.DerTTuktoDerTTx0}
        \lim_{k\to\infty}\TT_{u_k}'(t) = \TT_{x_0}'(t) \qquad
        \text{for a.e.\,$t\in I$}. 
       \end{equation}
       Now, since $\GG$ is continuous from $W^{1,1}(I)$ to $L^\infty(I)$, from 
       \eqref{eq.TTuktoTTx0}
        we get
        \begin{equation} \label{eq.GGuktoGGx0}
        \lim_{k\to\infty}\GG_{\TT_{u_k}}(t) = \GG_{
        \TT_{x_0}}(t)\qquad\text{for every $t\in I$}.
      \end{equation}       
      Moreover, from \eqref{eq.DerTTuktoDerTTx0} we easily derive that
      \begin{equation} \label{eq.DDTTuktoDDTTx0}
       \lim_{k\to\infty}\DD_{\TT_{u_k}'}(t) = \DD_{\TT_{x_0}'}(t) \qquad\text{for a.e.\,$t\in I$}.
      \end{equation}
      Gathering together \eqref{eq.GGuktoGGx0},
      \eqref{eq.DDTTuktoDDTTx0} and \eqref{eq.uktox0uniformly}, 
      we then obtain
      (remind that, by as\-sum\-ptions, $f$ and $\rho$
      are Carath\'{e}odory functions on $I\times\R$)
      \begin{align*}
       \lim_{k\to\infty}F_{u_k}(t) & =
	 \lim_{k\to\infty}\big(-f\big(t,\GG_{\TT_{u_k}}(t)\big)\,\rho\big(t, \DD_{\TT_{u_k}'}(t)\big)+
      \arctan\big({u_k}(t)-\TT_{u_k}(t)\big) \\[0.1cm] 
       & 
       = -f\big(t,\GG_{\TT_{x_0}}(t)\big)\,\rho\big(t, \DD_{\TT_{x_0}'}(t)\big)+
      \arctan\big({u_0}(t)-\TT_{u_0}(t)\big) \\[0.1cm] 
       & = F_{x_0}(t) \qquad\quad\text{for a.e.\,$t\in I$}.
      \end{align*}
      From this, a standard dominated-convergence
      based on \eqref{eq.estimFxpsitousedopo}
      allows us to conclude that $F_{u_k}\to F_{x_0}$ in $L^1(I)$ as $k\to\infty$,
      which is exactly the desired \eqref{eq.toprovecontF}. \medskip
      
      \textsc{Claim 3.} \emph{$\BB_a$ and $\BB_b$ 
      are continuous and bounded} (\emph{from $W^{1,1}(I)$ to $\R$}).  \medskip
      
      \noindent As regards the continuity, since
      $\HH_a,\,\HH_b$ are continuous from $W^{1,1}(I)$ to $\R$
      (by assumption (H4))
      and since $\TT$ is continuous
      on $W^{1,1}(I)$ (by Lemma \ref{lem.continuityTT}), we im\-me\-dia\-te\-ly deduce that
      $\BB_a = \HH_a\circ\TT$ and $\BB_b = \HH_b\circ\TT$ are continuous.
      
      As regards the boundedness, since $\HH_a,\,\HH_b$
      are monotone increasing (see \eqref{eq.assumptionHHaHHbmonotone}), 
      for every fixed $x\in W^{1,1}(I)$ we have
      $$\HH_a[\alpha]\leq \HH_a[\TT_x]\leq \HH_a[\beta]\qquad\text{and}\qquad
      \HH_b[\alpha]\leq \HH_b[\TT_x]\leq \HH_b[\beta]
      $$
      (remind that, by definition, $\alpha\leq\TT_x\leq\beta$ for all $x\in W^{1,1}(I)$).
      From
      this, we im\-me\-dia\-te\-ly deduce that $\BB_a,\,\BB_b$ are globally bounded,
      and the claim is proved.
      \medskip
      
      \noindent Using the results established in the above claims, one can
      prove the existence of solutions for
      \eqref{eq.BVPtrunc} (and the higher-regularity assertions (i)-(ii))
      by arguing es\-sen\-tial\-ly as in 
      \cite[Lem.\,2.1 and Thm.\,2.2]{CaMaPa2018}.
     The key points are the following.
     \begin{itemize}
      \item Thanks to Claim 3, it can be proved
      that for every $x\in W^{1,1}(I)$ there exists
      a \emph{unique} real number $z = z_x\in\R$ such that
      \begin{align*}
       & \BB_b[x]-\BB_a[x] = \int_a^b\frac{1}{k(t)}\,\Phi^{-1}\big(
     z_x+\mathcal{F}_x(t)\big)\,\d t,
     \end{align*}
     where
     $\mathcal{F}_x(t) := \int_a^t F_x(s)\,\d s$.
     Moreover, the map $x\mapsto z_x$ is \emph{bounded}, i.e.,
     $$|z_x|\leq \c_0 \qquad\text{for every $x\in W^{1,1}(I)$},$$
    where $\c_0 > 0$ is a universal constant which is independent of $x$.
    
    \item The solutions of \eqref{eq.BVPtrunc}
    are precisely the fixed points (in $W^{1,1}(I)$) of the o\-pe\-ra\-tor
    $\mathcal{A}:W^{1,1}(I)\to W^{1,1}(I)$ defined as follows:
    $$
    \mathcal{A}_x(t)  := \BB_a[x]+\int_a^t
    \frac{1}{k(t)}\,\Phi^{-1}\big(
     z_x+\mathcal{F}_x(t)\big)\,\d t \qquad
     (t\in I).$$
     
     \item Using all the above claims, it can be proved
     that $\AA$ is continuous, bounded and compact on $W^{1,1}(I)$;
     thus, Schauder's Fixed-Point theorem ensures that
   $\AA$ possesses (at least) one fixed point
   $x_0\in W^{1,1}(I)$.
   
   \item Finally, the 
   higher-regularity assertions (i)-(ii)
   are straightforward con\-se\-quen\-ces of the 
	following simple 
   observations: 
   \begin{itemize}
   \item[$\diamond$]
   $\AA(W^{1,1}(I))\subseteq W^{1,\vartheta}(I)$ if 
   $1/k\in L^\vartheta(I)$ (for some $1<\vartheta \leq\infty$);
   \item[$\diamond$]
   $\AA(W^{1,1}(I))\subseteq C^1(I,\R)$ if 
    $k\in C(I,\R)$ and $k > 0$ on $I$.
     \end{itemize}
   \end{itemize}
   We proceed with the second step. \bigskip
   
   \noindent\textbf{Step II.} In this second step we establish the following result:
   \emph{if $u\in W^{1,1}(I)$ is \emph{any} solution of
        \eqref{eq.BVPtrunc},
        then $u$ is also a solution of \eqref{eq.BVPmainSec2}.} \bigskip
        
    \textsc{Claim 1.} \emph{$\alpha(t)\leq u(t)\leq\beta(t)$ for every $t\in I$, so that}
          $$\text{\emph{$\TT_u\equiv u$ and $\GG_{\TT_u}\equiv \GG_u$ on $I$}}.$$
	\noindent We argue by contradiction and, to fix ideas,
        we assume that the (con\-ti\-nuous) function
        $v := u-\alpha$ attains a strictly negative minimum on $I$.
        
        Since $u$ solves \eqref{eq.BVPtrunc},
        $\alpha$ is a lower solution of problem \eqref{eq.BVPmainSec2}
        and the operators $\HH_a,\,\HH_b$ are monotone increasing (see
         assumption (H4)), we get
         $$u(a) = \HH_a[\TT_u] \geq \HH_a[\alpha] \geq \alpha(a)\quad \text{and}\quad
         u(b) = \HH_b[\TT_u] \geq \HH_b[\alpha] \geq \alpha(b)$$
         (remind that, by definition,
         $\TT_u\geq \alpha$ on $I$). 
         As a consequence, it is possible to find three 
         points $t_1,t_2,\theta\in \mathrm{int}(I)$,
         with $t_1<\theta<t_2$, such that \medskip
         
         (1)\,\,$u(t_i)-\alpha(t_i) = 0$ for $i = 1,2$; \vspace*{0.1cm}
         
         (2)\,\,$u(t)-\alpha(t) < 0$ for all $t\in (t_1,t_2)$; \vspace*{0.1cm}
         
         (3)\,\,$u(\theta)-\alpha(\theta) = \min\limits_{t\in I}(u(t)-\alpha(t)) < 0$. \medskip

		\noindent In particular, from (2) we infer that $\TT_u \equiv \alpha$ on $(t_1,t_2)$,
		and thus
		$$\DD_{\TT_u'}(t) = \DD_{\alpha'}(t) = \alpha'(t) \qquad\text{for a.e.\,$t\in I$}.$$
		By using once again the fact that $u$ solves \eqref{eq.BVPtrunc},
		and since $\alpha$ is a lower solution
		of problem \eqref{eq.BVPmainSec2},
		 from assumption (H3) we then obtain (for a.e.\,$t\in (t_1,t_2)$)
		\begin{equation} \label{eq.keycomputation}
		 \begin{split}
		   \big(\Phi(k(t)\,u'(t))\big)' & = -
		   f(t,\GG_{\TT_u}(t))\,\rho(t, \alpha'(t)) + \arctan(u(t)-\alpha(t)) \\[0.1cm]
		   & < -
		   f(t,\GG_{\TT_u}(t))\,\rho(t, \alpha'(t)) \\[0.1cm]
		   & = -\big(f(t,\GG_{\TT_u}(t))+\kappa\,\TT_u(t)\big)\rho(t, \alpha'(t)) 
		   + \kappa\,\TT_u(t)\,\rho(t, \alpha'(t)) \\[0.1cm]
		   & \big(\text{by \eqref{eq.assumptionGGmonotone}, since 
		   $\TT_u\geq \alpha$ on $I$ and $\rho\geq 0$ on $\R$}\big) \\[0.1cm]
		   & \leq 
		   -\big(f(t,\GG_{\alpha}(t))+\kappa\,\alpha(t)\big)\rho(t, \alpha'(t)) 
		   + \kappa\,\TT_u(t)\,\rho(t, \alpha'(t)) \\[0.1cm]
		   & \big(\text{since $\TT_u \equiv \alpha$ on $(t_1,t_2)$}\big) \\[0.1cm]
		   & = -f(t,\GG_{\alpha}(t))\,\rho(t, \alpha'(t)) 
		   \leq 
		   \big(\Phi(k(t)\,\alpha'(t))\big)'.
		 \end{split}
		\end{equation}
		We now consider the sets $A_1,\,A_2\subseteq I$ defined as follows:
		\begin{align*}
		& A_1 := \big\{t\in (t_1,\theta):\,\text{$\exists\,\,u'(t),\,\alpha'(t)$ and
		$u'(t) < \alpha'(t)$}\big\}\qquad\text{and} \\[0.1cm]
		& \qquad A_2 := \big\{t\in (\theta,t_2):\,\text{$\exists\,\,u'(t),\,\alpha'(t)$ and
		$u'(t) > \alpha'(t)$}\big\}.
		\end{align*}
		Since $u,\,\alpha\in W^{1,1}(I)$ and since $u<\alpha$ on $(t_1,t_2)$, it is
		very easy to check that both $A_1$ and $A_2$ have \emph{positive}
		Lebesgue measure; as a consequence, there exist
		$\tau_1\in A_1$ and $\tau_2\in A_2$ such that
		(see also Remarks \ref{rem.assumptionkconcrete}
		and \ref{rem.functionKx}) \medskip

		 (a)\,\,$k(\tau_i) > 0$ for $i = 1,2$; \vspace*{0.1cm}
		 
		 (b)\,\,$\KK_u(\tau_i) = k(\tau_i)\,u'(\tau_i)$ for $i = 1,2$; \vspace*{0.1cm}

		 (c)\,\,$\KK_\alpha(\tau_i) = k(\tau_i)\,\alpha'(\tau_i)$ for $i = 1,2$. \medskip

		\noindent By integrating both sides of
	    \eqref{eq.keycomputation} on $[\tau_1,\theta]$, and using (b)-(c),
		we then get
		$$\Phi\big(\KK_u(\theta)\big)- 
		\Phi\big(k(\tau_1)\,u'(\tau_1)\big)
		< \Phi\big(\KK_\alpha(\theta)\big) - 
		\Phi\big(k(\tau_1)\,\alpha'(\tau_1)\big).$$
        Since 
		$\Phi$ is strictly increasing, by (a) and the choice of $\tau_1$ we obtain
		\begin{equation} \label{eq.inequalityabsurd}
		 \Phi\big(\KK_u(\theta)\big)-\Phi\big(\KK_\alpha(\theta)\big) < 0. 
		\end{equation}
		On the other hand, by 
	    integrating both sides of inequality
	    \eqref{eq.keycomputation} on $[\theta,\tau_2]$ (and using once again
	    (b)-(c)),
		we derive that
		$$
		\Phi\big(k(\tau_2)\,u'(\tau_2)\big)-
		\Phi\big(\KK_u(\theta)\big)
		< 
		\Phi\big(k(\tau_2)\,\alpha'(\tau_2)\big)
		- \Phi\big(\KK_\alpha(\theta)\big);$$
		Since $\Phi$ is strictly increasing, by (a) and the choice of $\tau_2$ we obtain
		$$\Phi\big(\KK_u(\theta)\big)-\Phi\big(\KK_\alpha(\theta)\big) > 0.$$ 
		This is clearly in contradiction with 
		\eqref{eq.inequalityabsurd}, and thus $u(t)- \alpha(t)\geq 0$ 
		for every $t\in I$.
		By arguing exactly in the same way one can also
		prove that $u(t)- \beta(t)\leq 0$ for every $t\in I$,
		and the claim is completely demonstrated. \medskip
    
        \textsc{Claim 2.} 
        \emph{$|u(t)|\leq M$ and $|\GG_u(t)|\leq \eta_M$ for every $t\in I$}.
        \medskip
        
        \noindent By statement (i) and the choice of $M\geq \|\alpha\|_{L^\infty(I)},\,\|
		\beta\|_{L^\infty(I)}$, we get
		$$-M\leq \alpha(t)\leq u(t)\leq\beta(t)\leq M\qquad\text{for every $t\in I$},$$
		and this proves that $|u(t)|\leq M$ for all $t\in I$. From this,
		by taking into account the choice of $\eta_M$ in \eqref{eq.choiceetaM},
		we derive that $|\GG_u(t)|\leq\eta_M$, as desired.
		\medskip
		
		\textsc{Claim 3.} \emph{If $N > 0$ is as in
		\eqref{eq.choiceN}, then}
		\begin{equation} \label{eq.minKKuleqN}
		 \min_{t\in I}|\KK_u(t)|\leq N.
		\end{equation}
		By contradiction, let us assume that
		\eqref{eq.minKKuleqN} does not hold; moreover, to fix ideas
		(and taking into account the continuity of
		$\KK_u$),
		let us suppose that 
		\begin{equation} \label{eq.minKKabsurd}
		 \KK_u(t) > N \qquad\text{for every $t\in I$}.
		 \end{equation} 
		By integrating on $I$ both sides of the above inequality, we obtain
		\begin{align*}
		 N(b-a) < \int_a^b\KK_u(t)\,\d t = \int_a^bk(t)\,u'(t)\,\d t = (\bigstar);
		\end{align*}
		from this, since \eqref{eq.minKKabsurd}
		implies that $u'(t) > N/k(t)$ for a.e.\,$t\in I$, we then get
		\begin{align*}
		 (\bigstar) & \leq \|k\|_{L^\infty(I)}\,\int_a^bu'(t)\,\d t
		 = \|k\|_{L^\infty(I)}\,(u(b)-u(a)) \\
		 & \big(\text{by statement (ii) and
		 the choice of $N$, see \eqref{eq.choiceN}}\big) \\[0.1cm]
		 & \leq (2M)\cdot\|k\|_{L^\infty(I)} < N(b-a).
		\end{align*}
		This is clearly a contradiction, and thus
		$\min_I\KK_u\leq N$. By arguing exactly in the same way one can also
		show that $\sup_I\KK_u\geq -N$, and this proves 
	    \eqref{eq.minKKuleqN}. \medskip
	    
	    \textsc{Claim 4}. 
	    \emph{$|\KK_u(t)|\leq L_M$ for every $t\in I$}. \medskip
	    
	    \noindent Ar\-gu\-ing again by con\-tra\-diction,
	    we assume that there exists some point $\tau$ in $I$
	    such that $|\KK_u(\tau)| > L_M$; moreover, to fix ideas, 
	    we suppose that
	    \begin{equation*}
	     \KK_u(\tau) > L_M > 0.
	    \end{equation*}
	    Since $L_M > N$ (see \eqref{eq.choiceL}), by
	    \eqref{eq.minKKuleqN} (and the continuity of
	    $\KK_u$) we deduce the existence of
	    two points $t_1,\,t_2\in I$, with (to fix ideas)
	    $t_1<t_2$, such that \medskip
	    
	    (a)\,\,$\KK_u(t_1) = N$ and $\KK_u(t_2) = L_M$; \vspace*{0.1cm}
	    
	    (b)\,\,$N < \KK_u(t) < L_M$ for all $t\in (t_1,t_2)\subseteq I$. \medskip
	    
	    \noindent In particular, from (b), \eqref{eq.defigammaL}
	    and the choice
	    of $N$ in \eqref{eq.choiceN} we derive that
	    \begin{equation} \label{eq.stimuprimeHLM}
	     k(t)u'(t) \geq H_M\quad\text{and}\quad 0 < u'(t) < \frac{L_M}{k(t)}
	     \leq{\gamma}_L(t)
	    \end{equation}
	    for almost every $t\in (t_1,t_2)$. Now,
	    on account of \eqref{eq.stimuprimeHLM} and of the very
	    definition of $\DD$, we deduce that
	    $\DD_{u'}\equiv u'$ on $(t_1,t_2)$; as a consequence,
	    since $u$ is a solution of
	    \eqref{eq.BVPtrunc} and $\TT_u\equiv u$ on $I$ (by Claim 1.),
	    we have (a.e.\,on $(t_1,t_2)$)
	    \begin{align*}
	     & \big|\big(\Phi(\KK_u(t))\big)'\big|
	     = \big|\big(\Phi(k(t)\,u'(t))\big)'\big| = \big|f(t,\GG_u(t))\,\rho(t, u'(t))\big|
	     \\[0.1cm]
	     & \qquad \big(\text{by \eqref{eq.Nagumocondition}, 
	     since $k(t)u'(t) > H_M$ and $\|\GG_u\|\leq\eta_M$, see (ii)}\big) \\[0.1cm]
	     & \qquad \leq 
	     \psi_M\big(|\Phi(k(t)\,u'(t))|\big)
	 \cdot\big(l_M(t)+\mu_M(t)\,|u'(t)|^{\frac{q-1}{q}}\big) \\[0.1cm]
	 & \qquad = \psi_M\big(|\Phi(\KK_u(t))|\big)
	 \cdot\big(l_M(t)+\mu_M(t)\,|u'(t)|^{\frac{q-1}{q}}\big). 
	    \end{align*}
       In particular, since $u' > 0$ a.e.\,on $(t_1,t_2)$ (see
         \eqref{eq.stimuprimeHLM}) and since
         $$\Phi(\KK_u(t)) > \Phi(N) > 0\qquad\text{for any $t\in(t_1,t_2)$}$$ 
         (by (b), the monotonicity of
         $\Phi$ and the choice of $N$ in \eqref{eq.choiceN}), we obtain 
         \begin{equation} \label{eq.toIntegrate}
          \big|\big(\Phi(\KK_u(t))\big)'\big|
          \leq 
          \psi_M\big(\Phi(\KK_u(t))\big)
	 \cdot\big(l_M(t)+\mu_M(t)\,(u'(t))^{\frac{q-1}{q}}\big)
         \end{equation}
         for almost every $t\in (t_1,t_2)$.
         Using this last inequality, we then get
         (remind that $\Phi\circ\KK_u$ is absolutely continuous,
         see Remark \ref{rem.functionKx})
         \begin{align*}
           \int_{\Phi(N)}^{\Phi(L_M)}
          \frac{1}{\psi_M}\,\d s & = 
          \int_{\Phi(\KK_u(t_1))}^{\Phi(\KK_u(t_2))}\frac{1}{\psi_M}\,\d s
          = \int_{t_1}^{t_2}\frac{\big(\Phi(\KK_u(t))\big)'}
          {\psi_M\big(\Phi(\KK_u(t))\big)}\,\d t \\[0.1cm]
          & 
          \leq \int_{t_1}^{t_2}\big(l_M(t)+\mu_M(t)\,(u'(t))^{\frac{q-1}{q}}\big)\,\d t
          \\[0.1cm]
          & \leq \|l_M\|_{L^1(I)}+\int_{t_1}^{t_2}\mu_M(t)\,(u'(t))^{\frac{q-1}{q}}\,\d t
          \\[0.1cm]
          & \big(\text{by H\"older's inequality, since $\mu_M\in L^q(I)$}\big) \\[0.15cm]
          & \leq \|l_M\|_{L^1(I)}+\|\mu_M\|_{L^q(I)}\,(u(t_2)-u(t_1))^{\frac{q-1}{1}}\\[0.15cm]
          & \big(\text{since $|u(t)|\leq M$ for all $t\in I$, see Claim 2.}\big) \\[0.15cm]
          & \leq \|l_M\|_{L^1(I)}+\|\mu_M\|_{L^q(I)}\,
          (2M)^{\frac{q-1}{1}}.
         \end{align*}
         This is in contradiction with the choice of $L_M$ in \eqref{eq.choiceL},
         and thus $\KK_u(t)\leq L_M$ for every $t\in I$. By arguing exactly
         in the same way one can also show that $\KK_u(t)\geq -L_M$
         for all $t\in I$, and the claim is completely proved. \medskip
    
         \textsc{Claim 5.} 
         \emph{$|u'(t)|\leq L_M/k(t)$ for a.e.\,$t\in I$,
          so that} 
          $$\text{\emph{$\DD_{u'}\equiv u'$ a.e.\,on $I$}.}$$
          By Claim 4 and the very definition
         of $\KK_u$ we immediately get
         $$|u'(t)| = \frac{|\KK_u(t)|}{k(t)}\leq\frac{L_M}{k(t)}\qquad\text{
         for a.e.\,$t\in I$};$$
         as a consequence, since $L_M/k(t) \leq \gamma_L(t)$
         a.e.\,on $I$
         (see \eqref{eq.defigammaL}), from
         the very de\-fi\-ni\-tion of $\DD$
         in \eqref{eq.defiOpTT} we conclude that
         $\DD_{u'}\equiv u'$ on $I$. \bigskip
         
          Using the results
         established in the above claims, we can complete the proof
         of this step. Indeed, by Claim 1.\,we have 
         $\TT_u\equiv u$ and $\GG_{\TT_u}\equiv \GG_u$ on $I$;
         moreover, by Claim 5.\,we know
         that $\DD_{u'}\equiv u'$ a.e.\,on $I$. 
         Gathering to\-ge\-ther all these facts (and since
         $u$ is a solution of \eqref{eq.BVPtrunc}), for almost every
         $t\in I$ we get
         \begin{align*}
          \big(\Phi(\KK_u(t))\big)' & =
          -f(t,\GG_{\TT_u}(t))\,\rho(t, \DD_{\TT_u'}(t))
          + \arctan\big({u}(t)-\TT_{u}(t)\big) \\[0.1cm]
          &  = -f(t,\GG_u(t))\,\rho(t, u'(t)),
         \end{align*}
         and thus $u$ solves the ODE \eqref{eq.ODEconcrete}. Furthermore,
         by \eqref{eq.defiBBtrunc} we have
         \begin{align*}
         & u(a) = \BB_a[u] = \HH_a[\TT_u] = \HH_a[u] \qquad\text{and} \\[0.1cm]
         & \qquad u(b) = \BB_b[u] = \HH_b[\TT_u] = \HH_b[u],
         \end{align*}
         and this proves that $u$ is a solution of the BVP \eqref{eq.BVPmainSec2}.
         \bigskip
         
         \noindent Thanks to the results in Steps I and II, we are finally in a po\-si\-tion
         to conclude the proof of Theorem \ref{thm.mainPAPER}.
         Indeed, by Step I we know that there
        exists (at least) one solution $x_0\in W^{1,1}(I)$ of the 
        truncated BVP \eqref{eq.BVPtrunc}; on the other hand, we
        derive from Step II that
        $x_0$ is actually a solution of \eqref{eq.BVPmainSec2}. \vspace*{0.07cm}
        
        To proceed further we observe that, owing to
        Claim 1.\,in Step II,
        we im\-me\-dia\-te\-ly derive that
        $x_0$ satisfies \eqref{eq.x0traalphabeta}; moreover, the result
        in Step I ensures that
        \begin{itemize}
         \item if $1/k\in L^\vartheta(I)$ for some $1<\vartheta\leq\infty$, then
         $x_0\in W^{1,\vartheta}(I)$;
         \item if $k\in C(I,\R)$ and $k > 0$ on $I$, then $x_0\in C^1(I,\R)$.
        \end{itemize}
        Finally, by combining Claims 2.\,and 5.\,in
        Step II, we conclude
        that $x_0$ satisfies the `a-priori'
        estimate \eqref{eq.boundKKx0LM}, and the proof
        is complete.\hfill$\square$
    \begin{remark} \label{rem.assumptionH2Lp}
        By carefully scrutinizing the proof Theorem \ref{thm.mainPAPER}, one can recognize that
        estimate \eqref{eq.growthfrhoass} in assumption (H6)
        has been used only for demonstrating the result
        in Step I, and with the specific choice
        $$R = \eta_M \qquad\text{and}\qquad\gamma(t) = {\gamma}_L(t) = 
        \frac{L_M}{k(t)}+|\alpha'(t)|+|\beta'(t)|.$$
        As a consequence, if we know that
        \begin{equation} \label{eq.gammaLLtheta}
         {\gamma}_L\in L^\vartheta(I) \qquad(\text{for some $\vartheta > 1$}), 
        \end{equation}
        assumption (H6) can be replaced by the following
        weaker one:
        \begin{itemize}
         \item[{(H6)'}] for every $R > 0$ and every non-negative
       function $\gamma\in L^\vartheta(I)$ there exists a non-negative
       function $h = h_{R,\gamma}\in L^1(I)$ such that
       \begin{equation} \label{eq.growthfrhoassWEAK}
        \begin{array}{c}
        |f(t,z)\,\rho(t, y(t))|\leq h_{R,\gamma}(t) \\[0.2cm]
        \text{for a.e.\,$t\in I$, every $z\in\R$ with $|z|\leq R$} \\[0.05cm]
        \text{and every
        $y\in L^\vartheta(I)$ such that $|y(s)|\leq \gamma(s)$ for a.e.\,$s\in I$}
        \end{array}
       \end{equation}
        \end{itemize}
        Notice that \eqref{eq.gammaLLtheta}
       is certainly satisfied if
       $1/k\in L^\vartheta(I)$ and if the lo\-wer/up\-per so\-lu\-tions
       $\alpha,\beta$ in assumption (H5) can be chosen in $W^{1,\vartheta}(I)$.
       \end{remark}
      
     \section{Some examples} \label{sec.examples}
      In this last section of the paper
      we present some `model BVPs' illustrating
      the applicability of our existence result in Theorem \ref{thm.mainPAPER}.
      \begin{example} \label{exm.linear}
       Let us consider the following BPV on $I = [0,1]$
       \begin{equation} \label{eq.PBexm1}
        \begin{cases}
         \big(\sinh
         \big(\sqrt{t(1-t)}\cdot x'(t)\big)\big)' + 
         a\Big(\int_{0}^{t}x^3(s)\,\d s\Big)\,|x'(t)|^\varrho = 0
         & \text{a.e.\,on $I$}, \\[0.25cm]
         \,\,x(0) = \max\big\{x(1),1\big\},\,\,
         x(1) = \varepsilon\int_0^1x(s)\,\d s,
        \end{cases}
       \end{equation}
       where $a:\R\to\R$ is a general continuous non-decreasing function
       and $\varepsilon,\varrho\in(0,1)$.
       Problem \eqref{eq.PBexm1} takes the form
       \eqref{eq.BVPmainSec2}, with \medskip
       
 		$(\ast)$\,\,$k:I\to\R, \quad k(t) := \sqrt{t(1-t)}$; \vspace*{0.1cm}
 		
        $(\ast)$\,\,$\Phi:\R\to\R, \quad \Phi(z) := \sinh(z)$; \vspace*{0.1cm}
        
        $(\ast)$\,\,$f:I\times\R\to\R,\quad f(t,z) := a(z)$; \vspace*{0.1cm}
        
        $(\ast)$\,\,$\rho:I\times\R\to\R, \quad \rho(t, y) := |y|^\varrho$; \vspace*{0.1cm}
        
        $(\ast)$\,\,$\GG_x(t) := \int_0^tx^3(s)\,\d s$ (for $x\in W^{1,1}(I)$); \vspace*{0.1cm}
        
        $(\ast)$\,\,$\HH_0[x] := \max\{x(1),1\}$
        and $\HH_1[x] := \varepsilon\int_0^1x(s)\,\d s$ (for $x\in W^{1,1}(I)$). \medskip
        
        \noindent We aim to show that \emph{all} the assumptions
        of Theorem \ref{thm.mainPAPER} are satisfied in this case,
        so that problem \eqref{eq.PBexm1}
        possesses (at least) one solution $x_0\in W^{1,1}(I)$. 
		We explicitly point out that, in view of the boundary conditions,
		$x_0$ \emph{cannot be constant}.
        \medskip
        
         To begin with, we observe assumption
        (H1) is tri\-vial\-ly satisfied, since
        \begin{equation} \label{eq.sumkexm1}
         \text{$1/k\in L^\vartheta(I)$ for all $\vartheta\in[1,2)$}.
        \end{equation}
        As regards assumptions (H2)-(H3), we first notice that $\GG$ is a continuous
        operator mapping $W^{1,1}(I)$ into $C^1(I,\R)$; as a consequence,
        owing to Remark \ref{rem.continuityGG}, we know that
        $\GG$ is continuous from $W^{1,1}(I)$ into 
        $L^\infty(I)$ (with the usual norms).

        Furthermore, if $r > 0$ is any fixed positive number, we have 
        $$\|\GG_x\|_{L^\infty(I)}\leq \int_0^1|x(t)|^3\,\d t
        \leq r^3\quad\text{for all $x\in W^{1,1}$ with $\|x\|_{L^\infty}\leq r$},$$
        and thus \eqref{eq.assumptionGGbounded} is satisfied with $\eta_r := r^3$.
        Finally, since $a$ is non-decreasing and $\GG$ is increasing
        (with respect to the point-wise order), we readily see that
        $$W^{1,1}(I)\ni x \mapsto f(t,\GG_x(t)) = a\bigg(\int_0^tx^3(s)\,\d s\bigg)$$
        is monotone increasing, so that \eqref{eq.assumptionGGmonotone}
        holds with $\kappa = 0$. \vspace*{0.15cm}
        
        As regards assumption (H4), it is very easy to check that
        $\HH_0,\,\HH_1$ are continuous from
        $W^{1,1}(I)$ to $\R$ (remind that $W^{1,1}(I)$ is continuously
        embedded into $C(I,\R)$); moreover, if $x,y\in W^{1,1}(I)$
        are such that $x\leq y$ point-wise on $I$, then
        \begin{align*}
         & \HH_0[x] = \max\big\{x(1),1\big\}\leq
         \max\big\{y(1),1\big\} = \HH_0[y]\qquad\text{and} \\[0.1cm]
         & \qquad \HH_1[x] = \varepsilon\int_0^1x(s)\,\d s\leq 
         \varepsilon\int_0^1y(s)\,\d s = \HH_1[y],
        \end{align*}
        so that $\HH_0,\,\HH_1$ are also monotone increasing
        (w.r.t.\,to the point-wise order). \medskip
        
        \noindent We now turn to prove the validity of assumptions
        (H5)-to-(H7). \medskip
        
        \textsc{Assumption (H5).} We claim that the constant functions
        $$\alpha(t) := -1 \qquad\text{and}\qquad\beta(t) := 1$$
        are, respectively, a lower and an upper solution
        of problem \eqref{eq.PBexm1}. 
        
        In fact, 
        since $\rho(t, 0) = 0$ for all $t\in I$, we
        know from Remark \ref{rem.existencealphabeta}
        that $\alpha$ and $\beta$ are both lower and upper
        solutions of the \emph{differential equation}
	    $$\big(\sqrt{t(1-t)}\cdot x'(t)\big)\big)' 
        + a\Big(\int_{0}^{t}x^3(s)\,\d s\Big)\,|x'(t)|^\varrho = 0;$$
		moreover, owing to the very definitions of $\HH_0$ and $\HH_1$ we have \medskip
		
		(a)\,\,$\alpha(0) = -1 \leq 1 = 
		\HH_0[\alpha]$\,\,and\,\,$\alpha(1) = -1 \leq -\varepsilon = \HH_1[\alpha]$; \vspace*{0.15cm}
		
		(b)\,\,$\beta(0) = 1 = \HH_0[\beta]$\,\,and\,\,$\beta(1) = 1 \geq \varepsilon = \HH_1[\beta]$.
		\medskip
		
        \noindent On account of
        Definition \ref{def.lowerupper}, from (a)-(b) 
        we immediately derive that $\alpha$ is a lower 
        solution and $\beta$ is an upper solution of
        \emph{problem} \eqref{eq.PBexm1}. \medskip
        
        \textsc{Assumption (H6).} Let $R > 0$ be fixed
        and let $\gamma$ be a non-negative function belonging
        to $L^1(I)$. Since, by assumption, $a\in C(\R,\R)$, we have
        $$\begin{array}{c}
         |f(t,z)\,\rho(t, y(t))| = |a(z)|\cdot|y(t)|^\varrho\leq
        \big(\max_{|z|\leq R}|a(z)|\big)\cdot\gamma(t)^\varrho =: h_{R,\gamma}(t) \\[0.25cm]
        \text{for a.e.\,$t\in I$, every $z\in\R$ with $|z|\leq R$} \\[0.05cm]
        \text{and every
        $y\in L^1(I)$ such that $|y(s)|\leq \gamma(s)$ for a.e.\,$s\in I$}.
        \end{array}
        $$
        As a consequence, since $h_{R,\gamma}\in L^1(I)$
        (remind that, by assumption,
        $0<\varrho<1$), we immediately conclude that
        estimate \eqref{eq.growthfrhoass} is satisfied. \medskip
        
        \textsc{Assumption (H7).} Let $R > 0$ be arbitrarily fixed. 
        Since, by assumption, $a\in C(\R,\R)$,
        we have the following estimate
        \begin{align*}
         |f(t,z)\,\rho(t, y)| & = |a(z)|\cdot|y|^\varrho
         \leq \big(\max_{|z|\leq R}|a(z)|\big)\cdot
         |y|^\varrho,
        \end{align*}
        holding true for a.e.\,$t\in I$, every $z\in[-R,R]$
        and every $y\in\R$. As a consequence, we conclude that
        estimate \eqref{eq.Nagumocondition} is satisfied with the choice
        $$H_R = 1, \quad \psi_R\equiv 1, \quad
        l_R(t) \equiv 0,\quad
        \mu_R := \big(\max_{|z|\leq R}|a(z)|\big), \quad q = \frac{1}{1-\varrho}.$$
        Since all the assumptions of Theorem \ref{thm.mainPAPER} are fulfilled,
        we can conclude that there exists (at least) one solution
        $x_0\in W^{1,1}(I)$ of problem \eqref{eq.PBexm1},
        further satisfying
        $$-1\leq x_0(t)\leq 1\qquad\text{for every $t\in I$}.$$
        Moreover, from \eqref{eq.sumkexm1} we deduce that $x_0\in W^{1,\vartheta}(I)$
        for all $\vartheta\in [1,2)$.  
      \end{example}
      % ---------------------------------------------------------
      \begin{example} \label{exm.plaplacian}
	  Let $\vartheta_0\in(1,\infty)$ be fixed,
	  and let $\tau\in (0,2\pi)$. Moreover, let $p,\delta\in\R$ be 
	  two positive real numbers satisfying the following relation
      \begin{equation} \label{eq.choiceralphaexm2}
       1 < p < \vartheta_0+1 \qquad\text{and}\qquad
       0 < \delta < p-\frac{p-1}{\vartheta_0}.
      \end{equation}
       Finally, let $\Phi_p(z) := |z|^{p-2}z$ be usual
       $p$-Laplace operator on $\R$. We then
      consider 
       the following boundary-value problem on $I = [0,2\pi]$
       \begin{equation} \label{eq.PBexm2}
        \begin{cases}
         \big(\Phi_p
         \big(|\sin(t)|^{1/\vartheta_0}\,x'(t)\big)\big)' + 
         x_{\tau}(t)\,|x'(t)|^\delta = 0
         & \text{a.e.\,on $I$}, \\[0.25cm]
         \,\,x(0) = \sqrt[3]{x(\pi)},\,\,
         x(2\pi) = \frac{1}{4\pi}\int_0^{2\pi}({x(s)}+{2})\,\d s
        \end{cases}
       \end{equation}
       where $x_\tau$ is the delay-type function defined as
       $$x_\tau(t) := \begin{cases}
       x(t-\tau), & \text{if $\tau\leq t\leq 2\pi$}, \\
       x(0), & \text{if $0\leq t < \tau$}.
       \end{cases}$$
       Problem \eqref{eq.PBexm2} takes the form
       \eqref{eq.BVPmainSec2}, with \medskip
       
 		$(\ast)$\,\,$k:I\to\R, \quad k(t) := |\sin(t)|^{1/\vartheta_0}$; \vspace*{0.1cm}
 		
        $(\ast)$\,\,$\Phi:\R\to\R, \quad \Phi(z) = \Phi_p(z) = |z|^{p-2}z$; \vspace*{0.1cm}
        
        $(\ast)$\,\,$f:I\times\R\to\R,\quad f(t,z) := z$; \vspace*{0.1cm}
        
        $(\ast)$\,\,$\rho:I\times\R\to\R, \quad \rho(t, y) := |y|^\delta$; \vspace*{0.1cm}
        
        $(\ast)$\,\,$\GG_x(t) := x_{\tau}$ (for $x\in W^{1,1}(I)$); \vspace*{0.1cm}
        
        $(\ast)$\,\,$\HH_0[x] := \sqrt[3]{x(\pi)}$
        and $\HH_{2\pi}[x] := 
        \frac{1}{4\pi}\int_0^{2\pi}(x(s)+2)\,\d s$ (for $x\in W^{1,1}(I)$). \medskip
        
        \noindent We aim to show that \emph{all} the assumptions
        of Theorem \ref{thm.mainPAPER} are satisfied in this case,
        so that problem \eqref{eq.PBexm2}
        possesses (at least) one solution $x_0\in W^{1,1}(I)$. 
        We explicitly point out that, in view of the boundary conditions,
		$x_0$ \emph{cannot be constant}.
		\medskip
        
        To begin with, we observe that assumption (H1)
        is tri\-vial\-ly satisfied, since
        \begin{equation} \label{eq.sumkexm2}
         \text{$1/k\in L^\vartheta(I)$ for all $\vartheta\in[1,\vartheta_0)$}.
        \end{equation}
        As regards assumptions (H2)-(H3), we first notice that $\GG$ is a well-defined
        linear operator mapping $W^{1,1}(I)$ into 
        $L^\infty(I)$; as a consequence, since
        $$\|\GG_x\|_{L^\infty(I)} \leq \|x\|_{L^\infty(I)}
        \leq C\,\|x\|_{W^{1,1}(I)}\qquad\text{for every
        $x\in W^{1,1}(I)$},$$
        we immediately conclude that $\GG$ is continuous
        from $W^{1,1}(I)$ into $L^\infty(\R)$.
        Fur\-ther\-mo\-re, if $r > 0$ is any fixed positive number, we also have
        $$\|\GG_x\|_{L^\infty(I)}\leq \|x\|_{L^\infty(I)}
        \leq r\quad\text{for all $x\in W^{1,1}$ with $\|x\|_{L^\infty}\leq r$},$$
        and thus \eqref{eq.assumptionGGbounded} is satisfied with $\eta_r := r$.
        Finally, since $\GG$ is monotone increasing
        with respect to the point-wise order (as it is very easy to check)
        and since $f(t,z) = z$, one straightforwardly
        derives that \eqref{eq.assumptionGGmonotone}
        holds with $\kappa = 0$. \vspace*{0.15cm}
        
        As regards assumption (H4), it is easy to check that
        $\HH_0,\,\HH_{2\pi}$ are con\-ti\-nuo\-us from
        $W^{1,1}(I)$ to $\R$ (remind that $W^{1,1}(I)$ is continuously
        embedded into $C(I,\R)$); moreover, if $x,y\in W^{1,1}(I)$
        are such that $x\leq y$ point-wise on $I$, then
        \begin{align*}
         & \HH_0[x] = \sqrt[3]{x(\pi)}\leq \sqrt[3]{y(\pi)} = \HH_0[y]\qquad\text{and} \\[0.1cm]
         & \qquad \HH_{2\pi}[x] = \frac{1}{4\pi}\int_0^{2\pi}(x(s)+2)\,\d s\leq 
         \frac{1}{4\pi}\int_0^{2\pi}(y(s)+2)\,\d s = \HH_{2\pi}[y],
        \end{align*}
        so that $\HH_0,\,\HH_{2\pi}$ are also monotone increasing
        (w.r.t.\,to the point-wise order). \medskip
        
        \noindent We now turn to prove the validity of assumptions
        (H5)-to-(H7). \medskip
        
        \textsc{Assumption (H5).} We claim that the constant functions
        $$\alpha(t) := 1 \qquad\text{and}\qquad\beta(t) := 2$$
        are, respectively, a lower and an upper solution
        of problem \eqref{eq.PBexm1}. 
        
        In fact, 
        since $\rho(t,0) = 0$ for all $t\in I$, we
        know from Remark \ref{rem.existencealphabeta}
        that $\alpha$ and $\beta$ are both lower and upper
        solutions of the \emph{differential equation}
	    $$\big(\Phi_p
         \big(|\sin(t)|^{1/\vartheta_0}\,x'(t)\big)\big)' + 
         x_{\tau}(t)\,|x'(t)|^\delta = 0;$$
		moreover, owing to the very definitions of $\HH_0$ and $\HH_{2\pi}$ we have \medskip
		
		(a)\,\,$\alpha(0) = 1 =
		\HH_0[\alpha]$\,\,and\,\,$\alpha(2\pi) = 1 < 3/2 = \HH_{2\pi}[\alpha]$; \vspace*{0.15cm}
		
		(b)\,\,$\beta(0) = 2 > \HH_0[\beta]$\,\,and\,\,$\beta(2) = 2 = \HH_{2\pi}[\beta]$.
		\medskip
		
        \noindent On account of
        Definition \ref{def.lowerupper}, from (a)-(b) 
        we immediately derive that $\alpha$ is a lower solution
        and $\beta$ is an upper solution
        of
        \emph{problem} \eqref{eq.PBexm2}. \medskip
        
        \textsc{Assumption (H6).} We first observe that, by \eqref{eq.choiceralphaexm2}, we have
        $$0<\delta<\vartheta_0;$$
        thus, setting $\vartheta := \max\{1,\delta\}\in[1,\vartheta_0)$, by
        \eqref{eq.sumkexm2} we have
        $1/k\in L^{\vartheta}(I)$. 
        On the other hand, since $\alpha,\beta$ are constant,
        one has 
        $$\alpha,\beta\in W^{1,\vartheta}(I);$$
        as a consequence, according to Remark \ref{rem.assumptionH2Lp}, 
        it suffices to demonstrate that as\-sump\-tion (H6) holds
        in the weaker form (H6)' (with $\vartheta = \max\{1,\delta\}$).
        
        Let then $R > 0$ be fixed
        and let $\gamma$ be a non-negative function belonging
        to the space $L^\vartheta(I)$. Reminding that $f(t,z) = z$, we have
        the following computation
        $$\begin{array}{c}
         |f(t,z)\,\rho(t, y(t))| = |z|\cdot|y(t)|^\delta\leq
	     R\cdot\gamma(t)^\delta =: h_{R,\gamma}(t) \\[0.25cm]
        \text{for a.e.\,$t\in I$, every $z\in\R$ with $|z|\leq R$} \\[0.05cm]
        \text{and every
        $y\in L^1(I)$ such that $|y(s)|\leq \gamma(s)$ for a.e.\,$s\in I$}.
        \end{array}
        $$
        From this, since $h_{R,\gamma}\in L^1(I)$
        (remind that, by definition,
        $\delta\leq\vartheta$), we immediately conclude that
        estimate \eqref{eq.growthfrhoassWEAK} is satisfied. \medskip
        
        \textsc{Assumption (H7).} Let $R > 0$ be arbitrarily fixed. 
        Since $k\in C(I,\R)$
        (and since $f(t,z) = z$), we have the following computation
        \begin{align*}
         |f(t,z)\,\rho(t, y)| & = |z|\cdot|y|^\delta
         \leq \frac{R}{k(t)^\delta}\cdot|k(t)y|^{\delta} \\[0.2cm]
         & \big(\text{by \eqref{eq.choiceralphaexm2}, setting
         $q := \frac{\vartheta_0}{p-1} > 1$}\big) \\[0.2cm]
         & \leq |k(t)y|^{p-1}\Big(\frac{R}{k(t)^{\delta}}\cdot|k(t)y|^{\frac{q-1}{q}}\Big)
         \\[0.2cm]
         & = \Phi_p\big(|k(t)y|\big)\cdot\Big
         (\frac{R}{k(t)^{\delta+1/q-1}}\Big)\,|y|^{\frac{q-1}{q}}
        \end{align*}
        holding true for a.e.\,$t\in I$, every $z\in[-R,R]$
        and every $y\in\R$ with $|k(t)y|\geq 1$. As a consequence, 
        if we are able to demonstrate that
        \begin{equation} \label{eq.toprovesummabilitymu}
         t\mapsto \frac{R}{k(t)^{\delta+1/q-1}}\in L^q(I),
         \end{equation}
        we conclude that
        estimate \eqref{eq.Nagumocondition} is satisfied with the choice
        $$H_R = 1, \quad \psi_R(s) = s, \quad
        l_R(t) \equiv 0,\quad
        \mu_R(t) = \frac{R}{k(t)^{\delta+1/q-1}}, \quad q = \frac{\vartheta_0}{p-1}.$$
        In its turn, the needed 
        \eqref{eq.toprovesummabilitymu} follows from \eqref{eq.sumkexm2} and
        from the fact that
        $$q(\delta+1/q-1)
        = \frac{\vartheta_0}{p-1}\bigg(\delta+\frac{p-1}{\vartheta_0}-1\bigg) 
        < \frac{\vartheta_0}{p-1}\cdot(p-1) = \vartheta_0.$$
        Since all the assumptions of Theorem \ref{thm.mainPAPER} are fulfilled,
        we can conclude that there exists (at least) one solution
        $x_0\in W^{1,1}(I)$ of problem \eqref{eq.PBexm2},
        further satisfying
        $$1\leq x_0(t)\leq 2\qquad\text{for every $t\in I$}.$$
        Moreover, from \eqref{eq.sumkexm1} we deduce that $x_0\in W^{1,\vartheta}(I)$
        for all $\vartheta\in [1,\vartheta_0)$.  
      \end{example}
      \begin{example} \label{exm.ultimo}
       Let $d_1,d_2\in (0,\infty)$ be arbitrarily fixed,
       and let $I = [-1,1]$. De\-no\-ting by
       $\chi_A$ the indicator function of a set $A\subseteq\R$, we define
       \begin{equation} \label{eq.defikexm3}
        \kappa(t) := d_1\cdot\chi_{[-1,0]}(t)+d_2\cdot\chi_{[0,1]}(t).
       \end{equation}
       We then consider the following BPV:
       \begin{equation} \label{eq.PBexm3}
        \begin{cases}
         \Big(\big(\kappa(t)\,x'(t)\big)^3\Big)' + 
         \big(\max\limits_{s\in[-1,t]}x(s)\big)\cdot\log\big(1+|\sqrt[3]{t}\,x'(t)|^2\big) = 0
         & \text{a.e.\,on $I$}, \\[0.25cm]
         \,\,x(-1) = 0,\,\,
         x(1) = 1.
        \end{cases}
       \end{equation}
       Problem \eqref{eq.PBexm3} takes the form
       \eqref{eq.BVPmainSec2}, with \medskip
       
 		$(\ast)$\,\,$k:I\to\R, \quad k(t) := \kappa(t) = 
 		d_1\cdot\chi_{[-1,0]}(t)+d_2\cdot\chi_{[0,1]}(t)$; 
 		\vspace*{0.1cm}
 		
        $(\ast)$\,\,$\Phi:\R\to\R, \quad \Phi(z) := z^3$; \vspace*{0.1cm}
        
        $(\ast)$\,\,$f:I\times\R\to\R,\quad f(t,z) := z$; \vspace*{0.1cm}
        
        $(\ast)$\,\,$\rho:I\times\R\to\R, \quad \rho(t, y) := 
        \log\big(1+|\sqrt[3]{t}\,y|^2\big)$; \vspace*{0.1cm}
        
        $(\ast)$\,\,$\GG_x(t) := \max_{s\in[-1,t]}x(s)$ (for $x\in W^{1,1}(I)$); \vspace*{0.1cm}
        
        $(\ast)$\,\,$\HH_{-1}[x] := 0$
        and $\HH_1[x] := 1$ (for $x\in W^{1,1}(I)$). \medskip
        
        \noindent We aim to show that \emph{all} the assumptions
        of Theorem \ref{thm.mainPAPER} are satisfied in this case,
        so that problem \eqref{eq.PBexm3}
        possesses (at least) one solution $x_0\in W^{1,1}(I)$. 
        We explicitly point out that, in view of the boundary conditions,
		$x_0$ \emph{cannot be constant}.\medskip
        
        To begin with, we observe that assumption (H1)
        is tri\-vial\-ly satisfied, since
        \begin{equation} \label{eq.sumkexm3}
         \text{$1/k\in L^\vartheta(I)$ for all $1\leq\vartheta\leq\infty$}.
        \end{equation}
        As regards assumptions (H2)-(H3), we first notice that $\GG$ is a well-defined
        operator mapping $W^{1,1}(I)$ into $L^\infty(I)$; 
        as a consequence, since we have
        $$\|\GG_x-\GG_y\|_{L^\infty(I)}
        \leq \|x-y\|_{L^\infty(I)}\leq C\|x-y\|_{W^{1,1}(I)},$$        
        we immediately derive that
        $\GG$ is continuous from $W^{1,1}(I)$ into 
        $L^\infty(I)$ (with the usual norms).
        Furthermore, if $r > 0$ is any fixed positive number, we have 
        $$\|\GG_x\|_{L^\infty(I)}\leq \|x\|_{L^\infty(I)}
        \leq r\quad\text{for all $x\in W^{1,1}$ with $\|x\|_{L^\infty}\leq r$},$$
        and thus \eqref{eq.assumptionGGbounded} is satisfied with $\eta_r := r$.
        Finally, since $\GG$ is monotone increasing
        with respect to the point-wise order
        and since $f(t,z) = z$,
        by arguing as in Example \ref{exm.plaplacian} we derive
        that \eqref{eq.assumptionGGmonotone}
        holds with $\kappa = 0$. \vspace*{0.15cm}
        
        As regards assumption (H4), since $\HH_{-1}$
        and $\HH_1$ are \emph{constant}, 
        it is straight\-for\-ward to recognize that
        these operators are continuous (from $W^{1,1}(I)$ to $\R$)
        and monotone increasing
        (w.r.t.\,to the point-wise order). \medskip
        
        \noindent We now turn to prove the validity of assumptions
        (H5)-to-(H7). \medskip
        
        \textsc{Assumption (H5).} We claim that the constant functions
        $$\alpha(t) := 0 \qquad\text{and}\qquad\beta(t) := 1$$
        are, respectively, a lower and an upper solution
        of problem \eqref{eq.PBexm1}. 
        
        In fact, 
        since $\rho(t,0) = 0$ for all $t\in I$, we
        know from Remark \ref{rem.existencealphabeta}
        that $\alpha$ and $\beta$ are both lower and upper
        solutions of the \emph{differential equation}
	    $$\Big(\big(\kappa(t)\,x'(t)\big)^3\Big)' + 
         \big(\max\limits_{s\in[-1,t]}x(s)\big)\cdot\log\big(1+|\sqrt[3]{t}\,x'(t)|^2\big) = 0;$$
		moreover, since $\HH_{-1} \equiv 0$ and $\HH_1\equiv 1$,
        we immediately derive that $\alpha$ is a lower solution
        and $\beta$ is an upper solution
        of
        \emph{problem} \eqref{eq.PBexm3}. \medskip
        
        \textsc{Assumption (H6).} 
        We first observe that, on account of \eqref{eq.sumkexm3}, we have
        (in particular)
        $1/k\in L^2(I)$;
        moreover, since $\alpha,\beta$ are constant,
        one also has 
        $$\alpha,\beta\in W^{1,2}(I);$$
        As a consequence, according to Remark \ref{rem.assumptionH2Lp}, 
        it suffices to demonstrate that as\-sump\-tion (H6) holds
        in the weaker form (H6)' (with $\vartheta = 2$). \vspace*{0.1cm}
        
        Let then $R > 0$ be fixed
        and let $\gamma$ be a non-negative function belonging
        to $L^2(I)$. Since $\log(1+\tau)\leq\tau$ for every $\tau\geq 0$, we have
         $$\begin{array}{c}
         |f(t,z)\,\rho(t, y(t))| = |z|\cdot\log\big(1+|\sqrt[3]{t}\,y(t)|^2\big)
         \leq R\,|\sqrt[3]{t}\,y(t)|^2\leq R\,\gamma(t)^2 =: h_{R,\gamma}(t) \\[0.25cm]
        \text{for a.e.\,$t\in I=[-1,1]$, every $z\in\R$ with $|z|\leq R$} \\[0.05cm]
        \text{and every
        $y\in L^1(I)$ such that $|y(s)|\leq \gamma(s)$ for a.e.\,$s\in I$}.
        \end{array}
        $$
        As a consequence, since we have
        $h_{R,\gamma} = R\gamma^2\in L^1(I)$ (as $\gamma\in L^2(I)$), we im\-me\-dia\-te\-ly conclude that
        estimate \eqref{eq.growthfrhoass} is satisfied. \medskip
        
        \textsc{Assumption (H7).} Let $R > 0$ be arbitrarily fixed. 
        Using once again the fact that $\log(1+\tau)\leq\tau$ for all $\tau\geq 0$,
        and since $z^3\geq z^2$ if $z\geq 1$, we get the estimate
        \begin{align*}
         |f(t,z)\,\rho(t, y)| & = |z|\cdot\log\big(1+|\sqrt[3]{t}\,y|^2\big)
         \leq R\,|\sqrt[3]{t}\,y|^2\leq R\,|y|^2 \\[0.2cm]
         & (\text{setting $d := \min\{d_1,d_2\} > 0$}) \\[0.1cm]
         & \leq \frac{R}{d^2}\cdot|k(t)y|^2 \leq \frac{R}{d^2}\cdot\Phi\big(|k(t)y|\big),
        \end{align*}
        holding true for a.e.\,$t\in I$, every $z\in[-R,R]$
        and every $y\in\R$ with $|k(t)\,y|\geq 1$. 
        As a consequence, we conclude that
        estimate \eqref{eq.Nagumocondition} is satisfied with the choice
        $$H_R = 1, \quad \psi_R(s) = s, \quad
        l_R(t) := \frac{R}{d^2},\quad
        \mu_R \equiv 0.$$
        Since all the assumptions of Theorem \ref{thm.mainPAPER} are fulfilled,
        we can conclude that there exists (at least) one solution
        $x_0\in W^{1,1}(I)$ of problem \eqref{eq.PBexm3},
        further satisfying
        $$0\leq x_0(t)\leq 1\qquad\text{for every $t\in I$}.$$
        Moreover, from \eqref{eq.sumkexm3} we deduce that $x_0\in W^{1,\vartheta}(I)$
        for all $\vartheta\in [1,\infty]$. In particular, $x_0$
        is Lipschitz-continuous on $I$, but not of class $C^1$ (if $d_1\neq d_2$).
      \end{example}
 \appendix
 \section{Appendix: Continuity of truncating operators} \label{app.continuityTT}
 In this Appendix we prove in detail some properties of 
 the truncating operator. Despite these results are probably very classical,
 we were not be able to locate a precise
 reference in the literature; thus, we present here
 a complete demonstration for the sake of completeness. \bigskip
 
 To begin with, we fix a pair of functions $\omega,\zeta\in L^1(I)$ satisfying the ordering relation
 $\omega(t)\leq \zeta(t)$ a.e. in $I$, and we introduce
 the truncating operator
 $$\TT^{\,\omega,\zeta}:
 L^1(I)\to L^1(I), \qquad \TT^{\,\omega,\zeta}_x(t)=
 \min\big\{\omega(t), \max\{x(t),\zeta(t)\}\big\}.$$        
 We then prove the following result. 
 \begin{lemma} \label{lem.continuityTT}
  For every $x,y\in L^1(I)$, one has
  \begin{equation} \label{eq.TTcontrae}
   \big|\TT^{\,\omega,\zeta}_x(t)-\TT^{\,\omega,\zeta}_y(t)\big|\leq |x(t)-y(t)|.
   \end{equation}
   Moreover, if we further assume that $\omega,\zeta\in W^{1,1}(I)$, we have
   \begin{itemize}
   \item[\emph{(i)}] 
   $\TT^{\,\omega,\zeta}\big(W^{1,1}(I)\big)\subseteq W^{1,1}(I)$.
   \item[\emph{(ii)}] $\TT^{\,\omega,\zeta}$ is continuous from $W^{1,1}(I)$ into itself 
   \emph{(}with respect to the usual norm\emph{)}.
  \end{itemize}
 \end{lemma}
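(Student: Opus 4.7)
The key observation is that, pointwise almost everywhere on $I$, the truncation coincides with the median of the three values: when $\omega(t)\leq\zeta(t)$, one has
$$\TT^{\omega,\zeta}_x(t)=\mathrm{med}\bigl(\omega(t),x(t),\zeta(t)\bigr).$$
Since the map $s\mapsto\mathrm{med}(a,s,b)$ is $1$-Lipschitz in $s$ for each fixed $a\leq b$ (being a composition $\max(a,\min(s,b))$ of $1$-Lipschitz operations), the inequality \eqref{eq.TTcontrae} follows at once by evaluating at each $t$. Integrating \eqref{eq.TTcontrae} also shows that $\TT^{\omega,\zeta}$ is a continuous self-map of $L^1(I)$, a fact I will use implicitly below.

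For part (i), I will invoke the classical lattice property of $W^{1,1}(I)$: if $u,v\in W^{1,1}(I)$, then $\max(u,v),\,\min(u,v)\in W^{1,1}(I)$, with
$$\max(u,v)'=u'\,\chi_{\{u\geq v\}}+v'\,\chi_{\{u<v\}}\qquad\text{a.e.\,on }I,$$
and analogously for $\min$. Applying this fact twice to the expression $\TT^{\omega,\zeta}_x=\max\bigl(\omega,\min(x,\zeta)\bigr)$ immediately yields $\TT^{\omega,\zeta}_x\in W^{1,1}(I)$ together with the explicit a.e.\,formula
$$\bigl(\TT^{\omega,\zeta}_x\bigr)'=\omega'\,\chi_{A_x}+x'\,\chi_{B_x}+\zeta'\,\chi_{C_x},$$
where $A_x:=\{x<\omega\}$, $B_x:=\{\omega\leq x\leq\zeta\}$, and $C_x:=\{x>\zeta\}$.

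For part (ii), let $\{x_n\}\subseteq W^{1,1}(I)$ converge to $x$ in $W^{1,1}$, and denote by $A_n,B_n,C_n$ the corresponding partition. The $L^1$-convergence $\TT^{\omega,\zeta}_{x_n}\to \TT^{\omega,\zeta}_x$ is already granted by \eqref{eq.TTcontrae}, so everything reduces to proving $L^1$-convergence of the derivatives. Using the formula in (i), I would split the difference into three pieces and estimate each by dominated convergence. The delicate point — which is the main obstacle — is that $\chi_{A_n}\to\chi_{A_x}$ pointwise only off the contact set $\{x=\omega\}$, so the naive argument fails precisely there (and symmetrically on $\{x=\zeta\}$). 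The remedy is the classical Stampacchia-type lemma for Sobolev functions: if $u,v\in W^{1,1}(I)$, then $u'=v'$ a.e.\,on $\{u=v\}$. Applied to $x,\omega$ and $x,\zeta$, this guarantees that the integrand of the pertinent differences vanishes a.e.\,on the contact sets, rendering them harmless.

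With this in hand, the argument proceeds as follows: after extracting a subsequence (still denoted $x_n$) for which $x_n\to x$ and $x_n'\to x'$ pointwise a.e.\,on $I$, the indicator functions $\chi_{A_n},\chi_{B_n},\chi_{C_n}$ converge pointwise a.e.\,to $\chi_{A_x},\chi_{B_x},\chi_{C_x}$ outside $\{x=\omega\}\cup\{x=\zeta\}$; on those exceptional sets, the Stampacchia lemma replaces $x'$ by $\omega'$ or $\zeta'$ and makes the discrepancy vanish. Dominated convergence (with $|\omega'|+|\zeta'|+|x'|+\sup_n|x_n'|$ as a majorant, via a standard reduction to a dominating $L^1$ function through another subsequence) then gives $(\TT^{\omega,\zeta}_{x_n})'\to(\TT^{\omega,\zeta}_x)'$ in $L^1(I)$. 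A routine sub-subsequence argument promotes this convergence to the full sequence, completing the proof of (ii).
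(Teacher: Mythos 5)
Your proposal is correct, and its overall skeleton is the same as the paper's: both proofs reduce part (ii) to pointwise a.e.\ convergence of the derivatives along a subsequence (after extracting $x_n\to x$, $x_n'\to x'$ a.e.\ with an $L^1$ dominant $g\geq |x_n'|$), conclude by dominated convergence with the majorant $|\omega'|+|\zeta'|+g$, and upgrade to the full sequence by the sub-subsequence trick; part (i) and \eqref{eq.TTcontrae} are handled as easy preliminaries in both. The differences are in the middle layer. The paper factors the double truncation as the composition $\TT^{\,\omega,\zeta}=\MM\circ\mathrm{m}$ of a one-sided $\max$-truncation and a one-sided $\min$-truncation and proves continuity of each factor separately, whereas you work directly with the three-set partition $\{x<\omega\}$, $\{\omega\leq x\leq\zeta\}$, $\{x>\zeta\}$ and the corresponding a.e.\ formula for $(\TT^{\,\omega,\zeta}_x)'$; both routes are fine. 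More substantively, at the contact sets the paper does not invoke any lattice or Stampacchia-type result: it proves by hand, at a.e.\ point $\theta_0$ with $x_0(\theta_0)=\omega(\theta_0)$, that $\MM_{x_0}'(\theta_0)=x_0'(\theta_0)=\omega'(\theta_0)$, via an interior-extremum argument or one-sided difference quotients on the boundaries of $\{x_0>\omega\}$ and $\{x_0<\omega\}$ (its cases $\mathrm{(3)_1}$--$\mathrm{(3)_3}$). You instead quote the classical facts that $W^{1,1}(I)$ is a lattice with $\max(u,v)'=u'\chi_{\{u\geq v\}}+v'\chi_{\{u<v\}}$ a.e.\ and that $u'=v'$ a.e.\ on $\{u=v\}$; this shortens the argument at the cost of an external reference, while the paper's computation is deliberately self-contained (the Appendix exists precisely to supply a complete elementary proof). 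One point you should make explicit when writing this up: at a contact point $t\in\{x=\omega\}$ with $\omega(t)<\zeta(t)$, uniform convergence $x_n\to x$ rules out the value $\zeta'(t)$ for $(\TT^{\,\omega,\zeta}_{x_n})'(t)$ for large $n$, and when $\omega(t)=x(t)=\zeta(t)$ the Stampacchia identity applied to both pairs gives $x'(t)=\omega'(t)=\zeta'(t)$ a.e., so that all admissible candidate values $\omega'(t)$, $x_n'(t)$, $\zeta'(t)$ converge to the common limit; this is the precise content of your phrase ``the discrepancy vanishes.''
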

     \begin{proof}
     We limit ourselves to prove only assertion (ii), since
     \eqref{eq.TTcontrae} 
     is trivial and (i) is an immediate consequence of \eqref{eq.TTcontrae} and 
     the well-known characterization of $W^{1,1}(I)$ in terms of absolutely continuous functions
     (see, e.g., \cite{Brezis}). \medskip
     
     \noindent First of all we observe that, if we introduce the operators
	 \begin{equation} \label{eq.defiOpMaxMin}
	  \begin{split}
	  & \MM: W^{1,1}(I)\to W^{1,1}(I), \qquad
	  \MM_x(t) := \max\big\{\omega(t),x(t)\big\}, \\[0.2cm] 
	  &  \mathrm{m}:W^{1,1}(I)\to W^{1,1}(I), \qquad
	   \mathrm{m}_x(t) := \min\big\{\zeta(t),x(t)\big\},
	  \end{split}
	 \end{equation}
     the operator $\TT^{\,\omega,\zeta}$ is the composition
     between $\mathrm{m}$ and $\MM$, that is,
     $$\TT^{\,\omega,\zeta}_x 
     = \big(\MM\circ \mathrm{m}\big)(x) \qquad\text{for all $x\in W^{1,1}(I)$}.$$
     As a consequence, to prove the lemma it suffices to show that
     \emph{both} $\MM$ and $\mathrm{m}$ are continuous on $W^{1,1}(I)$.
	 Here we limit ourselves to demonstrate this fact only for the operator
	 $\MM$, 
	 since the case of $\mathrm{m}$ goes along the same lines. \medskip
	 
	 Let then $x_0\in W^{1,1}(I)$ be fixed, and let
	 $\{x_n\}_n\subseteq W^{1,1}(I)$ be a sequence
	 con\-ver\-ging to $x_0$ as $n\to\infty$ in $W^{1,1}(I)$.
     Moreover, let $\{y_k := x_{n_k}\}_k$ be an arbitrary sub-sequence
     of $\{x_n\}_n$. To prove the continuity of
     $\MM$ we show that, by choosing a further sub-sequence
     if necessary,
     one has 
     \begin{equation} \label{eq.toproveconTTsubseq}
      \lim_{n\to\infty}\MM_{y_k} = x_0 \qquad\text{in $W^{1,1}(I)$}.
     \end{equation}
     To ease the readability, we split the demonstration
     of \eqref{eq.toproveconTTsubseq} into some steps.
     \medskip
	 
	 \textsc{Step I.} In this step we show that
     \begin{equation} \label{eq.toproveTTyktoTTx0L1}
      \lim_{k\to\infty}\|\MM_{y_k}-\MM_{x_0}\|_{L^1(I)} = 0.
      \end{equation}
      To this end, we first notice that, since
     $y_k\to x_0$ in $W^{1,1}(I)$ as $k\to \infty$, we also have
     that $y_k$ converges \emph{uniformly on $I$}
     to $x_0$ as $k\to\infty$ (see, e.g.,
     \cite[Theorem 8.8]{Brezis}); as a consequence, since
     one can easily recognize that
     $$\|\MM_{y_k}-\MM_{x_0}\|_{L^\infty(I)}\leq \|y_k-x_0\|_{L^\infty(I)},$$
     we deduce that
     $\MM_{y_k}\to \MM_{x_0}$ uniformly on $I$ as $k\to \infty$,
     and \eqref{eq.toproveTTyktoTTx0L1} follows. \medskip
     
     \textsc{Step II.} In this step we show that, up to a sub-sequence,
     one has
     \begin{equation} \label{eq.toproveTTukprimeTTx0primeae}
      \lim_{k\to \infty}\MM_{y_k}'(t) = \MM_{x_0}'(t) \qquad\text{a.e.\,on $I$}.
     \end{equation}
     To this end, we first fix a couple of notation
     which shall be useful in the sequel. Given any point
     $t_0 \in (a,b)$ and any $\rho > 0$, we set
     $$I(t_0,\rho) := [t_0-\rho,t_0+\rho];$$
     moreover, given any function $\xi\in W^{1,1}(I)$, we define
     \begin{equation} \label{eq.defiNomega}
     \mathcal{N}_\xi := \big\{t\in(a,b):\,\text{$\xi$
     is not differentiable at $t$}\big\}.
     \end{equation}
     Notice that, since $\xi\in W^{1,1}(I)$, the set $\mathcal{N}_\xi$
     has zero Lebesgue measure. \vspace*{0.1cm}
     
     We now start with the proof of \eqref{eq.toproveTTukprimeTTx0primeae}.
     First of all, since $y_k\to x_0$
     in $W^{1,1}(I)$ as $k\to\infty$, we clearly have that
     $y_k'\to x_0'$ in $L^1(I)$ (as $k\to\infty$); as a consequence,
     it is possible to find a non-negative function $g\in L^1(I)$
     and a set $\mathcal{Z}\subseteq I$, with vanishing Lebesgue measure,
     such that (up to a sub-sequence) \medskip
     
     (i)\,\,$
     y_k'(t)\to x'_0(t)$ as $k\to\infty$ for every $t\in I\setminus \mathcal{Z}$; \medskip
     
     (ii)\,\,$|y_k'(t)|\leq g(t)$ for every $k\in\N$ and every $t\in I\setminus\mathcal{Z}$. \medskip
     
     \noindent With reference to \eqref{eq.defiNomega}, we
      consider the following set:
     \begin{equation} \label{eq.defiSetNN}
     \mathcal{N} :=
     \bigcup_{k\in\N}\mathcal{N}_{y_k}\cup
     \bigcup_{k\in\N}\mathcal{N}_{\MM_{y_k}}
     \cup \mathcal{N}_{x_0}\cup \mathcal{N}_{\MM_{x_0}}
     \cup \mathcal{N}_\omega\cup\mathcal{Z}.
     \end{equation}
	 Since it is a countable union of sets
	 with zero Lebesgue measure, the set
	 $\mathcal{N}$ has zero Lebesgue measure as well;
	 thus, to prove
	 \eqref{eq.toproveTTukprimeTTx0primeae} it suffices to show that
	 \begin{equation} \label{eq.toproveTTykprimeTTx0primeoutN}
	  \lim_{k\to\infty}\MM_{y_k}'(t) = \MM_{x_0}'(t) \qquad\text{for all
	  $t\in (a,b)\setminus\mathcal{N}$}.
	 \end{equation}
	 Let then $\theta_0\in(a,b)\setminus\mathcal{N}$ be arbitrary but fixed.
	 We demonstrate the claimed \eqref{eq.toproveTTykprimeTTx0primeoutN}
	 by analyzing separately the following three possibilities. \medskip
	 
	 (1)\,\,$x_0(\theta_0)<\omega(\theta_0)$. In this case,
	 we let $\rho > 0$ be so small that 
	 \begin{equation} \label{eq.choiceIthetarhocase2}
	  \text{$I(\theta_0,\rho)\subseteq(a,b)$
	 and 
	 $x_0 < \omega$ on $I(\theta_0,\rho)$}.
	 \end{equation}
	 Owing to the very definition
	 of $\MM$ in \eqref{eq.defiOpMaxMin}, we get
	 $\MM_{x_0}\equiv\omega$ on $I(\theta_0,\rho)$; moreover,
	 since $\omega$ is differentiable in
	 $\theta_0\notin\mathcal{N}_{\omega}$, we have
	 \begin{equation} \label{eq.TTx0primeequivalbetaprimecase2}
	  \MM_{x_0}'(\theta_0) = \omega'(\theta_0).
	  \end{equation}
	 Now, since we know from Step I that $y_k$ converges \emph{uniformly on $I$} to $x_0$
	 as $k\to \infty$, by \eqref{eq.choiceIthetarhocase2} we can find a natural
	 number $\kappa_0$ such that
	 $$y_k(t) < \omega(t) \qquad \text{for $t\in I(\theta_0,\rho)$ 
	 and every $k \geq \kappa_0$;}$$
	 thus, again by definition of $\MM$ we deduce that
	 $\MM_{y_k}\equiv \omega$ on $I(\theta_0,\rho)$
	 for all $k\geq \kappa_0$.
     In particular, $\omega$ being differentiable
	 at $\theta_0$
	 we have
	 \begin{equation} \label{eq.TTykprimeequivbetaprimecase2}
	  \MM_{y_k}'(\theta_0) = \omega'(\theta_0) \qquad\text{for
	 every $k\geq\kappa_0$}.
	 \end{equation}
	 Gathering together 
	 \eqref{eq.TTx0primeequivalbetaprimecase2} 
	 and \eqref{eq.TTykprimeequivbetaprimecase2}, we then
	 obtain \eqref{eq.toproveTTykprimeTTx0primeoutN} in this case. \medskip
	 
	 (2)\,\,$x_0(\theta_0) > \omega(\theta_0)$. In this case,
	 we let $\rho > 0$ be so small that 
	 \begin{equation} \label{eq.choiceIthetarhocase3}
	  \text{$I(\theta_0,\rho)\subseteq(a,b)$
	 and 
	 $x_0 > \omega$ on $I(\theta_0,\rho)$}.
	 \end{equation}
	 Owing to the very definition
	 of $\MM$ in \eqref{eq.defiOpMaxMin}, we get
	 $\MM_{x_0}\equiv x_0$ on $I(\theta_0,\rho)$; moreover,
	 since $x_0$ is differentiable in
	 $\theta_0\notin\mathcal{N}_{x_0}$, we have
	 \begin{equation} \label{eq.TTx0primeequivalbetaprimecase3}
	  \MM_{x_0}'(\theta_0) = x_0'(\theta_0).
	  \end{equation}
	 Now, using \eqref{eq.choiceIthetarhocase3} and arguing again as in
	 (1), we can find $\kappa_0\in\N$ such that
	 $$\text{$\MM_{y_k}\equiv y_k$ on $I(\theta_0,\rho)$
	 for all $k\geq \kappa_0$;}$$
     in particular, $y_k$ being differentiable
	 at $\theta_0$ for all $k\in\N$,
	 we have
	 \begin{equation} \label{eq.TTykprimeequivbetaprimecase3}
	  \MM_{y_k}'(\theta_0) = y_k'(\theta_0) \qquad\text{for
	 every $k\geq\kappa_0$}.
	 \end{equation}
	 Since $\theta_0\notin\mathcal{Z}$
	 and
	 since $y_k'\to x_0'$ as $k\to\infty$ on $I\setminus\mathcal{Z}$, by combining
	 \eqref{eq.TTx0primeequivalbetaprimecase3} with 
	 \eqref{eq.TTykprimeequivbetaprimecase3}
	 we readily conclude that
	 \eqref{eq.toproveTTykprimeTTx0primeoutN}
	 holds
	 also in this case. \medskip
	 
	 (3)\,\,$x_0(\theta_0) = \omega(\theta_0)$.
     First of all, since for every $k\in\N$
	 the function $\MM_{y_k}$ is differentiable
	 at $\theta_0$ (as $\theta_0\notin\mathcal{N}$, see \eqref{eq.defiSetNN}),
	 by the very definition of $\MM$ we have
	 \begin{equation*}
	  \MM_{y_k}'(\theta_0)\in\big\{\omega'(\theta_0),\,y_k'(\theta_0)\big\}.
	 \end{equation*}
	 As a consequence, since we know that $y_k'(\theta_0)\to x_0'(\theta_0)$ as
	 $k\to\infty$ (as $\theta_0\notin\mathcal{Z}$, see (i)
	 at the beginning of this step), to prove \eqref{eq.toproveTTykprimeTTx0primeoutN}
     it suffices to show that
	 \begin{equation} \label{eq.expressionderivativeTTx0case4}
	  \MM_{x_0}'(\theta_0) = x_0'(\theta_0) = \omega'(\theta_0).
	 \end{equation}
     To establish \eqref{eq.expressionderivativeTTx0case4} 
     we need to consider three different sub-cases. \medskip
     
     $\mathrm{(3)_1}$\,\,$\theta_0\notin\de\{x_0 > \omega\}$.
	 In this case, since $\mathcal{O} := \{x_0 > \omega\}$ is open
	 and $\theta_0\notin\mathcal{O}$,
     there exists $\rho > 0$ such that
	 $x_0 \leq \omega$ on $I(\theta_0,\rho)$; thus,
	 by \eqref{eq.defiOpMaxMin} we have
	 $$\text{$\MM_{x_0}\equiv \omega$ on $I(\theta_0,\rho)$}.$$
	 Since $\omega$ is differentiable at $\theta_0$
	 (as $\theta_0\notin\mathcal{N}_\omega$), we then obtain
	 \begin{equation} \label{eq.derTTx0case41}
	  \MM_{x_0}'(\theta_0) = \omega'(\theta_0).
	  \end{equation}
     On the other hand, since $x_0-\omega\leq 0$ on
     $I(\theta_0,\rho)$ and $x_0(\theta_0) = \omega(\theta_0)$, we see that
     $\theta_0$ is an interior maximum point for $x_0-\omega$
     on $I(\theta_0,\rho)$; this function being differentiable
     at $\theta_0$, we then conclude that 
     \begin{equation}  \label{eq.derx0case41}
      x_0'(\theta_0) = \omega'(\theta_0).
      \end{equation}
	 Gathering together \eqref{eq.derTTx0case41} and 
	 \eqref{eq.derx0case41}, we obtain
     \eqref{eq.expressionderivativeTTx0case4} in this case.
	 \medskip
	 
	 $\mathrm{(3)_2}$\,\,$\theta_0\notin\de\{x_0 < \omega\}$. 
	 In this case, since $\mathcal{O} := \{x_0 < \omega\}$ is open
	 and $\theta_0\notin\mathcal{O}$,
     there exists $\rho\in(0,\rho_0)$ such that
	 $x_0 \geq \omega$ on $I(\theta_0,\rho)$; thus,
	 by \eqref{eq.defiOpMaxMin} we have
	 $$\text{$\MM_{x_0}\equiv x_0$ on $I(\theta_0,\rho_0)$}.$$
	 From this, by arguing exactly as in case $\mathrm{(3)_1}$,
	 we obtain
     \eqref{eq.expressionderivativeTTx0case4}. \medskip
     
     $\mathrm{(3)_3}$\,\,$\theta_0\in\de\{x_0<\omega\}\cap\de\{x_0 > \omega\}$.
	 In this last case, both the open sets
	 \begin{align*}
	 & \mathcal{O}^+ = \big\{x_0>\omega\big\}
	 \qquad \text{and} \qquad
	  \mathcal{O}^- = \big\{x_0<\omega\big\}
	 \end{align*}
	 are non-empty and $\theta_0\in\de(\mathcal{O}^+)\cap
	 \de(\mathcal{O}^-)$; thus, by crucially exploiting
	 the fact that the functions $\MM_{x_0}$ and
	 $x_0$ are dif\-fe\-ren\-tia\-ble at
	 $\theta_0$, we can write
	 \begin{align*}
	  \MM_{x_0}'(\theta_0) & = \lim_{\begin{subarray}{c}
	   t\to\theta_0 \\
	   t\in I
	  \end{subarray}}
	  \frac{\MM_{x_0}(t)-\MM_{x_0}(\theta_0)}{t-\theta_0} \\[0.1cm]
	  &\big(\text{since $\MM_{x_0}(\theta_0) = x_0(\theta_0) = \omega(\theta_0)$, see
	  \eqref{eq.defiOpMaxMin}}\big) \\[0.1cm]
	  & = \lim_{\begin{subarray}{c}
	   t\to\theta_0 \\
	   t\in \mathcal{O}^+
	  \end{subarray}}
	  \frac{\MM_{x_0}(t)-x_0(\theta_0)}{t-\theta_0} \\[0.1cm] 
	  & \big(\text{since $x_0 > \omega$ on $\mathcal{O}^+$}\big) \\[0.1cm]
	  & = \lim_{\begin{subarray}{c}
	   t\to\theta_0 \\
	   t\in \mathcal{O}^+
	  \end{subarray}
	  }\frac{x_0(t)-x_0(\theta_0)}{t-\theta_0} = x_0'(\theta_0).
	 \end{align*}
	 On the other hand, using the fact that
	 $\omega$ is differentiable at $\theta_0$, we also have
	 \begin{align*}
	  \MM_{x_0}'(\theta_0) & = \lim_{\begin{subarray}{c}
	   t\to\theta_0 \\
	   t\in I
	  \end{subarray}}
	  \frac{\MM_{x_0}(t)-\MM_{x_0}(\theta_0)}{t-\theta_0} \\[0.1cm]
	  & = \lim_{\begin{subarray}{c}
	   t\to\theta_0 \\
	   t\in \mathcal{O}^-
	  \end{subarray}}
	  \frac{\MM_{x_0}(t)-\omega(\theta_0)}{t-\theta_0} \\[0.1cm] 
	  & \big(\text{since $x_0 < \omega$ on $\mathcal{O}^-$}\big) \\[0.1cm]
	  & = \lim_{\begin{subarray}{c}
	   t\to\theta_0 \\
	   t\in \mathcal{O}^-
	  \end{subarray}
	  }\frac{\omega(t)-\omega(\theta_0)}{t-\theta_0} = \omega'(\theta_0).
	 \end{align*}
	 Gathering together these two facts, we conclude that
	 \begin{equation} \label{eq.derTTx0derx0deralphacase4}
	  \MM_{x_0}'(\theta_0) = x_0'(\theta_0) = \omega'(\theta_0),
	 \end{equation}
	 which is exactly the desired \eqref{eq.expressionderivativeTTx0case4}. \medskip
	 
	 \textsc{Step III.} In this step we prove that, up to a sub-sequence,
	 one has
	 \begin{equation} \label{eq.toproveStepIII}
	  \lim_{k\to\infty}\|\MM_{y_k}' - \MM_{x_0}'\|_{L^1(I)} = 0.
	 \end{equation}
	 To begin with, by exploiting
	 the results in Step II, we know that there exists
	 a set $\mathcal{N}\subseteq(a,b)$, with zero Lebesgue measure,
	 such that (up to a sub-sequence) \medskip
	 
	 (a)\,\,$y_k'\to x_0'$ point-wise on $I\setminus\mathcal{N}$; \medskip
	 
	 (b)\,\,$|y_k'|\leq g$ on $I\setminus\mathcal{N}$ for a suitable
	 function $g\in L^1(I)$; \medskip
	 
	 (c)\,\,$\MM_{y_k}'\to \MM_{x_0}'$ point-wise on $I\setminus\mathcal{N}$. \medskip
	 
	 \noindent In particular, since for every $k\in\N$ we have
	 $$\MM_{y_k}'\in \big\{\omega'(t),\,y_k'(t)\big\} \qquad\text{a.e.\,on $I$},$$
	 from (b) we obtain the following estimate
	 \begin{equation} \label{eq.estimcrucialTTyk}
	  |\MM_{y_k}'(t)| \leq |\omega'(t)|+g(t) =: \xi(t), 
	  \qquad\text{for a.e.\,$t\in I$}.
	 \end{equation}
	 By combining 
	 \eqref{eq.estimcrucialTTyk} with (c) we can perform a stan\-dard do\-mi\-na\-ted\--convergence
	 argument, proving the claimed \eqref{eq.toproveStepIII}. \medskip
	 
	 \textsc{Step IV.} In this last step we complete the demonstration of the lemma.
	 By combining \eqref{eq.toproveTTyktoTTx0L1} in Step I
	 with \eqref{eq.toproveStepIII} in Step III, we straightforwardly get
	 \begin{align*}
	  & \lim_{k\to\infty}\|\MM_{y_k}-\MM_{x_0}\|_{W^{1,1}(I)} \\
	 & \qquad = \lim_{k\to\infty}\big(\|\MM_{y_k}-\MM_{x_0}\|_{L^1(I)}
	 + \|\MM_{y_k}'-\MM_{x_0}'\|_{L^1(I)}\big) = 0,
	 \end{align*}
	 and this is exactly our starting goal (see \eqref{eq.toproveconTTsubseq}). 
	 This ends the proof.
	 \end{proof}
     \begin{remark} \label{rem.dimostrazionegiusta}
	  As a matter of fact, in the recent paper \cite{CaMaPa2018} it is contained a
	  proof of Lemma \ref{lem.continuityTT}; however, it seems that
	  this proof is not correct. We thus take this occasion to correct the mistake
	  in \cite{CaMaPa2018} by giving a new proof of Lemma \ref{lem.continuityTT}.
	 \end{remark}

\end{document}